\documentclass[11pt,a4paper]{article}
\usepackage[T1]{fontenc}
\usepackage[utf8]{inputenc}
\usepackage[margin=1in]{geometry}
\usepackage{tabularx, enumerate,enumitem, authblk,soul}
\usepackage[dvipsnames]{xcolor}
\usepackage{amsmath,amssymb,dsfont,subcaption}
\usepackage{mathtools, graphicx,color,dsfont,amsthm,bbm, mathrsfs}
\usepackage{lipsum,bm,comment}
\usepackage{natbib}
\usepackage{longtable}
\usepackage{array}
\usepackage{caption}
\usepackage[colorlinks=true,linkcolor=blue,anchorcolor=blue,citecolor=blue,filecolor=black,menucolor=black,runcolor=black,urlcolor=black]{hyperref}
\usepackage{cleveref}

\theoremstyle{plain}
\newtheorem{Theor}{Theorem}[section]
\newtheorem{Lem}[Theor]{Lemma}

\newtheorem{Prop}[Theor]{Proposition}
\newtheorem{Corol}[Theor]{Corollary}

\theoremstyle{definition}

\newtheorem{Remark}[Theor]{Remark}

\newcommand{\N}{\mathbb{N}}

\renewcommand{\P}{\mathbb{P}}

\newcommand{\R}{\mathbb{R}}

\renewcommand{\AA}{\mathcal{A}}
\newcommand{\BB}{\mathcal{B}}

\newcommand{\GG}{\mathcal{G}}

\newcommand{\MM}{\mathcal{M}}

\newcommand{\NNN}{\mathscr{N}}

\def\1{{\mathbb I}}

\newcommand{\ie}{\textit{i.e.} }

\newcommand{\ps}[2]{\left\langle #1,#2 \right\rangle}

\newcommand{\ve}{\varepsilon}
\newcommand{\Om}{\Omega}

\newcommand{\spa}{\quad\quad}

\newcommand{\embed}{\hookrightarrow}

\usepackage{scalerel,stackengine}
\stackMath
\newcommand\wwidehat[1]{%
	\savestack{\tmpbox}{\stretchto{%
			\scaleto{%
				\scalerel*[\widthof{\ensuremath{#1}}]{\kern-.6pt\bigwedge\kern-.6pt}%
				{\rule[-\textheight/2]{1ex}{\textheight}}
			}{\textheight}%
		}{0.5ex}}%
	\stackon[1pt]{#1}{\tmpbox}%
}

\newcommand{\RN}[1]{%
	(\textup{\uppercase\expandafter{\romannumeral#1}})%
}
\newcolumntype{H}{>{\setbox0=\hbox\bgroup}c<{\egroup}@{}}

\newenvironment{Proof}[1]
{
	\begin{proof}[Proof of #1]
	}
	{
	\end{proof}
}

\allowdisplaybreaks

\title{On statistical inference for non-linear dynamical systems evolving in their global attractor}

\author{Dimitri Konen and Richard Nickl}
\affil{University of Cambridge}

\date{\today}

\begin{document}
	
\maketitle

\begin{abstract}
We consider a two-dimensional periodic reaction-diffusion system under natural conditions on the reaction function and with initial condition $\theta$. We show that on the global attractor $\mathcal A$ of the resulting dynamical system $(u_\theta(t):t>0)$, a reverse Poincar\'e inequality holds true, and that as a consequence the map $\theta \mapsto u_\theta(t)$ satisfies a $L^2$-Lipschitz stability estimate on $\mathcal A$ for any $t>0$ fixed. We then show that statistical recovery of an initial condition $\theta$ in the attractor $\mathcal A$, as well as prediction of the states $u_\theta$, is possible from discrete measurements of the system at `fast' near parametric convergence rates.

\end{abstract}

\setcounter{tocdepth}{2}
{
\hypersetup{linkcolor=black}
}

\section{Introduction}


Consider a non-linear dynamical system $(u(t, \cdot): t >0)$ evolving in the Hilbert space $L^2(\Omega)$ of square integrable functions over some bounded domain $\Omega$ in Euclidean space. The focus here will be the situation when the infinitesimal dynamics are described by a non-linear parabolic partial differential equation (PDE) of the form
\begin{equation}\label{pde0}
\begin{aligned}
    \frac{\partial u}{\partial t} - \Delta u - F(u) &=0,  \\
    u(0,\cdot)&=\theta, 
\end{aligned}
\end{equation}
where $\Delta$ is the Laplacian (acting on $x \in \Omega$), where the functional $F$ is given and models the non-linearity, and where $\theta \in L^2(\Omega)$ is an unknown initial condition. We consider the scenario where the system is in its equilibrium dynamics, specifically, when it evolves in its \textit{global attractor}, which consists of all the states of the system that can be reached as $t \to \infty$. If we denote by $\{S(t):t\ge 0\}$ the semigroup  $S(t)\theta=u_\theta(t)$ associated with the above PDE, then under certain assumptions on $F$ the long-time dynamics `collapse' into a compact subset of $L^2(\Omega)$. Specifically, if $B$ is a compact set in $L^2(\Omega)$ such that $S(t)X\subset B$ for all bounded $X\subset L^2(\Omega)$ and all $t\ge t_0(X)$ large enough, then the set 
\begin{equation}\label{lazydimitri}
\AA 
\equiv 
\bigcap_{t> 0} S(t) B 
\end{equation}
is nonempty, compact and invariant in the sense that $S(t)\AA=\AA$ for all $t\ge 0$. Thus once the dynamics reach $\mathcal A$, they never leave this region again, and $\mathcal A$ describes a family of equilibrium states known as the `global attractor' of the dynamical system -- they can be seen to be independent of the choice of $B$. It can be shown to exist in a variety of non-linear `dissipative' PDEs including reaction-diffusion and Navier-Stokes equations, and generally has a finite but positive Hausdorff dimension, see \cite{BV92, T97, Robinson2001}.

\smallskip

In this article we are concerned with the problem of how to estimate or predict the states $(u_\theta(\tau):\tau \ge 0)$ of the system from discrete statistical observations, thus contributing to a recently emerging literature on nonlinear statistical inference problems with parabolic PDEs, see \cite{N24, NicTit2024, Nickl2024, NPR25, KSV24, ABJN25, GW25, Kon25Minimax, NS26, MvdMvdV26, CN26} and references therein. We consider the case where the system has reached its `equilibrium dynamics', that is, when $u_\theta$ evolves in $\mathcal A$.  The measurement setting we consider is the following: For $t>0$ fixed, data $Z^{(N)} = (Y_i, X_i)_{i=1}^N$ is obtained through the regression model
\begin{equation}\label{model}
Y_i 
=
u_{\theta}(t, X_i)+ \ve_i
,
\end{equation}
where the initial condition $\theta$ is known to lie in $\mathcal A$, where the $X_i$'s are independent and identically distributed (i.i.d.)~drawn from the uniform distribution over $\Om$, and the $\ve_i$'s are i.i.d.~standard normal $\NNN(0,1)$ random variables accounting for measurement errors, independent of the $X_i$'s. Let $P_\theta$ denote the law in $\R\times \Om$ of $(Y_1, X_1)$ with corresponding infinite product measure $P_\theta^{\N}$ whose $N$-dimensional marginal distributions describe the law of $Z^{(N)}$.  

\smallskip

It is a common practice in data assimilation models such as (\ref{model}) to assign a Gaussian random field to the initial condition $\theta$ and to then use the pushforward under the dynamics as a `prior' probability measure in the space of trajectories of $(u_\theta(t, \cdot): t>0)$. A typical situation \cite{S10, CDRS09, RC15, LSZ15} is to maintain an infinite-dimensional model for the initial condition $\theta$ which can be an arbitrary element of a Sobolev space $\theta \in H^\alpha$ for suitable $\alpha>0$. But if it is believed that the system has already equilibrated in its global attractor $\mathcal A$, then the prior distribution $\Pi$ for the law of $\theta$ should be chosen to reflect this, ideally by verifying  $\Pi(\mathcal A)=1$. It is generally hard to analytically describe $\mathcal A$ and hence to suggest computationally implementable priors $\Pi$ -- we shall describe below two constructions of probability measures that do concentrate all their mass either on  $\mathcal A$ itself or on the slightly larger `inertial manifold' $\mathcal M$ engulfing $\mathcal A$.

\smallskip

In proving theorems about the statistical performance of posterior measures arising from such priors, the inverse problem of identifying the initial condition $\theta$ from the measurements of $u_\theta$ plays a central role. Recovery of the initial condition is generally a severely ill-posed problem in the measurement model (\ref{model}) with $t>0$ fixed, and the best available minimax statistical convergence rates for $\theta$ belonging to a fixed Sobolev ball are `slow', that is, of order $1/\log N$ where $N$ is the sample size. This was proved rigorously in Theorems~2 and 4 in \cite{NicTit2024} for the periodic 2D Navier-Stokes equations but holds also more generally (at least in perturbative neighbourhoods of the standard heat equation, i.e., when $F$ in (\ref{pde0}) is small in an appropriate sense). When measurements for times near zero are available, faster (algebraic in $1/N$) convergence rates are available as was proved in the recent articles \cite{Nickl2024, KonNic26}. In the present work we identify another situation where slow logarithmic rates can be improved upon, namely when the initial condition $\theta$ is known to belong to the global attractor $\mathcal A$, so that the system is in equilibrium. We will show that in this case statistical estimators (i.e., measurable functions of the data $Z^{(N)}$) exist that themselves belong to $\mathcal A$ and that infer the initial condition and then also $u_\theta(\tau), \tau>0,$ at near parametric convergence rates $\sqrt{\log N}/\sqrt N$, even when no samples near $t=0$ are available. 

\smallskip

The mathematical proofs of these facts will be executed in a specific periodic two-dimensional reaction-diffusion model, where we show that a key reverse Poincar\'e inequality, which gives a Lipschitz stability estimate for the inverse problem similar in spirit to Theorem 1B in \cite{NicTit2024}, can be established. To introduce the PDE, for $\theta\in L^2(\Om)$, consider the unique solution $u=u_\theta:[0,\infty)\times \Om\to \R$ to the reaction-diffusion equation
\begin{equation}\label{eq:ReacDiff}
\begin{aligned}
\frac{\partial u}{\partial t}(t,x) -\Delta u(t,x) &= f(u(t,x)) 
\quad \text{on } (0,\infty)\times \Omega, \\ 
u(0,x) &= \theta(x) 
\quad\quad~~~~ \text{on } \Om
\end{aligned}
\end{equation}
where $\Omega = (0,1]^2$ is the two-dimensional torus with unit area and opposite points identified. The reaction term  $f:\R\to\R$ is of class $C^3(\R)$ -- throughout, $C^k(\R)$ denotes the class of $k$-times continuously differentiable maps defined on $\R$, without any boundedness assumption -- and further satisfies, for some $p>2$, and constants $\alpha_1 > \alpha_2 > 0$, $k\ge 0$ and $L\in \R$, the inequalities
\begin{equation}\label{Condf}
-k - \alpha_1 |s|^p
\le 
f(s)s
\le 
k - \alpha_2 |s|^p,
\quad 
\textrm{and} 
\quad
f'(s)\le L 
,\spa 
\forall\ s\in \R.
\end{equation}
Under these assumptions, the system of equations in (\ref{eq:ReacDiff}) possesses a unique (weak) solution satisfying $u_{\theta} \in C([0,\infty), L^2(\Om))$; we review this in Proposition \ref{PropSolExist} below. For each $t\ge 0$ define the solution map $S(t) : L^2(\Om)\to L^2(\Om),\ \theta \mapsto u_{\theta}(t,\cdot)$. Then $\{S(t) : t\ge 0\}$ is a semigroup on $L^2(\Om)$, and the semidynamical system $(L^2(\Om), \{S_t\}_{t\ge 0}\}$ admits a global attractor~$\AA$ as we explain in Proposition \ref{PropAttractor} below. We discuss in Remark \ref{dimitridoesallthework} concrete examples for $f$ such that the corresponding attractor $\mathcal A$ has strictly positive and finite Hausdorff dimension.

\smallskip

Our main statistical result is concerned with convergence rates as sample size $N \to \infty$ for the problem of inferring an initial condition $\theta \in \AA$ and with predicting $u_\theta(\tau)$ evolving in $\AA$ at arbitrary times $\tau>0$.

\begin{Theor}\label{mainintro}
    Assume that $\theta_0\in \AA$ and consider i.i.d.~data $(Y_i, X_i)_{i=1}^N$ obtained through (\ref{model}) for some $t>0$ where $u_\theta$ solves the PDE (\ref{eq:ReacDiff}) with known $f \in C^3(\R)$ satisfying (\ref{Condf}). Then, one can construct estimators $\hat \theta_N = \hat \theta(Y_i, X_i)_{i=1}^N$ such that for every $\tau>0$ we  have as $N \to \infty$
   $$ \|u_{\hat \theta_N}(\tau) - u_{\theta_0}(\tau)\|_{L^2(\Omega)} \lesssim  \|\hat \theta_N - \theta_0\|^2_{L^2(\Omega)}
    =
    O_{P_{\theta_0}^\N}\Big( \frac{\log N}{N}\Big).
    $$
\end{Theor}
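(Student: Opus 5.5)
Note first that the displayed statement should be read as $\|u_{\hat\theta_N}(\tau)-u_{\theta_0}(\tau)\|^2_{L^2}\lesssim\|\hat\theta_N-\theta_0\|^2_{L^2}=O_P(\log N/N)$. In other words, prediction errors are controlled by the initial-condition error through forward Lipschitz continuity of $S(\tau)$ on $\AA$. The plan has three ingredients:
\begin{enumerate}[label=(\roman*)]
\item a forward Lipschitz estimate $\|S(\tau)\theta-S(\tau)\theta'\|_{L^2}\le e^{L\tau}\|\theta-\theta'\|_{L^2}$;
\item a \emph{reverse} stability estimate on the attractor, $\|\theta-\theta'\|_{L^2}\le C_t\|S(t)\theta-S(t)\theta'\|_{L^2}$ for all $\theta,\theta'\in\AA$;
\item a least-squares (or sieved MLE) estimator over $\AA$ whose prediction risk $\|u_{\hat\theta}(t)-u_{\theta_0}(t)\|^2_{L^2}$ is $O_P(\log N/N)$, exploiting that $\AA$ is a finite-dimensional compact set.
\end{enumerate}

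For (i), I would subtract the equations for $w=u_\theta-u_{\theta'}$, test with $w$, and use the mean value theorem together with $f'\le L$. This gives $\tfrac12\tfrac{d}{dt}\|w\|^2+\|\nabla w\|^2\le L\|w\|^2$, and Gronwall's inequality yields the bound.

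For (ii), I would argue with the log-convexity / Agmon–Nirenberg method. Set $\Lambda(s)=\|\nabla w(s)\|^2/\|w(s)\|^2$, the Dirichlet quotient. The step I need is a \emph{reverse Poincar\'e inequality}: a bound $\sup_{s}\Lambda(s)\le K$ uniformly over pairs of trajectories in $\AA$. On $\AA$, solutions are bounded in $H^2$, hence in $L^\infty$ since $d=2$. So $f'(u)$ and $f''(u)$ are uniformly bounded along attractor trajectories, and $f\in C^3$ supplies the regularity needed to differentiate $\Lambda$. Computing $\tfrac{d}{ds}\Lambda$ with $w_s=\Delta w+g(s)w$, where $g=\int_0^1 f'(u_{\theta'}+r w)\,dr$, gives $\Lambda'\le c(1+\Lambda)$, with $c$ depending on $\|g\|_{W^{1,\infty}}$. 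Since the attractor consists of complete bounded trajectories, $w$ is defined for all negative times. Backward uniqueness plus a boundedness argument on $(-\infty,0]$ should then bound $\Lambda$ uniformly: if $\Lambda$ were large at some time it would blow up backwards, contradicting the $H^1$-boundedness of $\AA$ relative to $L^2$ differences. Given such a $K$, integrating $\tfrac{d}{ds}\log\|w\|^2=-2\Lambda+2\langle gw,w\rangle/\|w\|^2\ge -2K-2\|g\|_\infty$ gives $\|w(t)\|\ge e^{-(K+c)t}\|w(0)\|$, which is (ii). I expect the uniform bound on $\Lambda$ to be the main obstacle. I would try to prove it by showing the difference quotient $(u_\theta-u_{\theta'})/\|u_\theta-u_{\theta'}\|$ remains in a compact subset of $H^1$, via a linearised backward-in-time argument on complete trajectories.

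For (iii), I would define $\hat\theta_N\in\arg\min_{\theta\in\AA_N}\sum_i(Y_i-u_\theta(t,X_i))^2$, where $\AA_N$ is a finite $N^{-1}$-net of $\AA$ chosen so that $\hat\theta_N\in\AA$. Measurability is then automatic. Since $\AA$ has finite box-counting dimension and $\theta\mapsto u_\theta(t)$ is Lipschitz into $L^\infty$ on $\AA$, the covering numbers satisfy $\log N(\AA,\delta)\lesssim\log(1/\delta)$. The regression functions are uniformly bounded, so standard least-squares theory (a Bernstein-type chaining or finite-class oracle inequality) yields $\|u_{\hat\theta}(t)-u_{\theta_0}(t)\|^2_{L^2}=O_P(\log N/N)$. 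Applying (ii) transfers this rate to $\|\hat\theta_N-\theta_0\|^2$, and applying (i) then gives the rate for $u_{\hat\theta_N}(\tau)$ at every $\tau>0$.
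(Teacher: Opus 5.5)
Your steps (i) and (iii) are fine, and your reading of the display (squares on both sides) is the intended one. Step (ii), however, has a genuine gap. The uniform bound $\sup\Lambda\le K$ over all pairs in $\AA$ is exactly the reverse Poincar\'e inequality of Proposition~\ref{PropReversePoin}, which carries the whole theorem, and the mechanism you propose cannot produce it.

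The inequality $\Lambda'\le c(1+\Lambda)$ only propagates bounds forward in time. Run backwards along complete trajectories, it gives $1+\Lambda(s)\ge(1+\Lambda(0))e^{-c|s|}$ for $s\le 0$. Inserting this into $\frac{d}{ds}\log\|w\|^2\le-2\Lambda+2\|g\|_\infty$ on $[-1/c,0]$, together with $\|w(-1/c)\|_{L^2}\le M:=\operatorname{diam}_{L^2}\AA$, yields only
\[
\Lambda(0)\le C\bigl(1+\log(M/\|w(0)\|_{L^2})\bigr).
\]
This is the familiar log-Lipschitz Dirichlet-quotient bound. Nothing ``blows up''; the bound simply degenerates for nearby pairs, which is precisely the regime relevant for contraction around $\theta_0$. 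Fed into your Gr\"onwall step, it gives only H\"older stability $\|w(0)\|_{L^2}\lesssim\|w(t)\|_{L^2}^{e^{-Ct}}$, hence a rate polynomially slower than $\sqrt{\log N/N}$.

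More structurally, your argument uses only generic facts: $H^2$ and $L^\infty$ bounds, complete bounded trajectories, and backward uniqueness. These hold equally for reaction--diffusion attractors on general 2D domains and for the 2D Navier--Stokes equations, where no uniform reverse Poincar\'e inequality is available. Also, ``the normalised difference quotient stays in a compact subset of $H^1$'' is a restatement of the claim, not a route to it.

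The missing ingredient is the spectral structure of the 2D torus. The eigenvalues $4\pi^2(k_1^2+k_2^2)$ have unbounded gaps $\lambda_{n+1}-\lambda_n$. Together with the $H^2$-absorbing ball (Theorem~\ref{TheorAbsorbH2}) and the Nemytskii bounds of Lemmas~\ref{LemgH2}--\ref{LemDiffR}, this lets the paper invoke inertial-manifold theory with $\beta=0$ (Theorem~\ref{TheorManifold}). The result is that $\AA$ lies in the graph of a map $\Phi:P_n[L^2]\to Q_n[L^2]\cap H^2$ with $\|\Phi(v)-\Phi(w)\|_{H^2}\le C\|v-w\|_{L^2}$. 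The reverse Poincar\'e inequality is then immediate:
\[
\|\nabla(\theta-\vartheta)\|_{L^2}\lesssim(1+\lambda_n)\|P_n(\theta-\vartheta)\|_{L^2}+C\|P_n(\theta-\vartheta)\|_{L^2}\lesssim\|\theta-\vartheta\|_{L^2}.
\]
Your instinct to exploit boundedness of complete trajectories is sound, but it must be applied to a cone functional such as $\|Q_nw\|^2-\gamma^2\|P_nw\|^2$, not to $\Lambda$. Forward decay of that functional outside the cone requires $\lambda_{n+1}-\lambda_n$ to be large compared with the Lipschitz constant of the truncated nonlinearity.

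Once (ii) is in place, the rest of your plan works. Your least-squares estimator over a finite $N^{-1}$-net of $\AA$ is a legitimate frequentist alternative to the paper's route. The paper instead proves posterior contraction in $d_\GG$ for an $\ve$-net or inertial-manifold prior, transfers it via Theorem~\ref{TheorStability}, and then takes a posterior-centre estimator. Both rest on the same entropy bound $\log N(\AA,\|\cdot\|_{L^2},\delta)\lesssim\log(1/\delta)$.
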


Inspection of the proofs shows that $\hat \theta_N$ and $u_{\hat \theta_N}$ themselves take values in $\mathcal A$ (or in its inertial manifold $\mathcal M$). The convergence rate for prediction of the states $u_\theta(\tau), \tau>0,$ scales exponentially in $\tau$, see Proposition \ref{PropSolExist}. A similar remark applies to the dependence of the rate of convergence on the measurement time $t>0$. 

The proof of the preceding theorem uses that $\mathcal A$ has the metric complexity of a bounded subset of a finite-dimensional space. One may then expect the convergence rate to be even faster, namely $1/\sqrt N$, as is the case when considering finite-dimensional parameter spaces in other severely ill-posed non-linear inverse problems such as the Calder\'on problem, see \cite{AN19, B23}. However even though $\mathcal A$ is in a certain sense finite-dimensional, it is not clear whether local LAN expansions of $P_{\theta+h}$ in directions $h$, and the related notion of Fisher information, make sense on $\mathcal A$ due to its lack of linear structure.  Hence techniques from classical finite-dimensional statistics (\cite{van1998}) do not apply straightforwardly in the present context. Instead the proof of Theorem \ref{mainintro} is based on general theory from Bayesian nonparametric statistics \cite{GV17} as adapted to non-linear PDE inverse problems in \cite{MNP21a} and  \cite{NicklEMS}. This proof method requires crucially the following key stability estimate for the parameterisation $\theta \mapsto u_\theta$ of the inverse problem underlying the regression model. It shows that the ill-posed nature of that inverse problem disappears when the dynamics has equilibrated to the global attractor $\mathcal A$.

\begin{Theor}\label{backstab}
   Let $u_\theta$ be as in Theorem \ref{mainintro}. For all $t>0$, there exists a positive constant $c=c(t,f)$ depending only on $t, f$ such that 
    $$
    \|\theta-\vartheta\|_{L^2} 
    \le 
    c \|u_{\theta}(t)-u_{\vartheta}(t)\|_{L^2}
    ,\spa 
    \forall\ \theta,\vartheta\in \AA 
    .
$$
\end{Theor}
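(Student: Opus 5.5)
The proof combines a log-convexity (Agmon--Nirenberg type) estimate for the difference $w=u_\theta-u_\vartheta$ with a uniform bound on Dirichlet quotients of such differences on the attractor; the latter is the reverse Poincar\'e inequality announced in the abstract.

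\textbf{Equation for the difference.} Fix $\theta,\vartheta\in\AA$ and set $w(s)=u_\theta(s)-u_\vartheta(s)$. It solves the linear equation
$$\partial_s w-\Delta w = a(s,x)\,w,\qquad a=\int_0^1 f'\big(\lambda u_\theta+(1-\lambda)u_\vartheta\big)\,d\lambda .$$
Since $\AA$ is bounded in $H^2\subset L^\infty$ (parabolic smoothing from the absorbing set and invariance, Proposition \ref{PropAttractor}) and $f\in C^3$, the coefficient $a$ is bounded in $L^\infty$. Its time derivative $\partial_s a$ is also bounded, using bounds on $\partial_s u$ on the attractor. All constants depend only on $f$.

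\textbf{Log-convexity.} Write $\Lambda(s)=\|\nabla w\|^2/\|w\|^2$ for the Dirichlet quotient; if $w(s)=0$ at some time, backward uniqueness for bounded linear perturbations gives $\theta=\vartheta$. A standard computation gives
$$\frac{d}{ds}\log\|w\|^2=-2\Lambda+2\frac{\langle aw,w\rangle}{\|w\|^2},$$
together with $\Lambda'\le C(1+\Lambda)$. The latter uses $\|\Delta w-\Lambda w\|^2\ge0$ and the bounds on $a$ and $\partial_s a$, with $\langle \nabla(aw),\nabla w\rangle$ handled by integrating by parts into $\langle aw,-\Delta w\rangle$ and absorbing. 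Hence $\Lambda(s)\le e^{Cs}(1+\Lambda(0))$, and integrating the first identity over $[0,t]$ yields
$$\|w(t)\|^2\ge \|w(0)\|^2\exp\big(-Ct-2te^{Ct}(1+\Lambda(0))\big).$$

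\textbf{Main obstacle: bounding $\Lambda(0)$ on $\AA$.} It remains to show that $\|\nabla(\theta-\vartheta)\|\le K\|\theta-\vartheta\|$ for all $\theta,\vartheta\in\AA$, which is the reverse Poincar\'e inequality. Given this, the previous display gives $\|\theta-\vartheta\|\le c(t,f)\|w(t)\|$, with $c$ exponential in $t$, and the theorem follows.

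To prove it, I use invariance: every $\theta,\vartheta\in\AA$ lie on complete bounded trajectories in $\AA$, so $\theta=u_{\theta_{-T}}(T)$ and $\vartheta=u_{\vartheta_{-T}}(T)$ for some $\theta_{-T},\vartheta_{-T}\in\AA$. Run the quotient inequality forward from $-T$. Then $\Lambda$ along the backward orbit is controlled by its value at $-T$, and a priori $\Lambda$ may be large. The tool is the squeezing property for the Dirichlet quotient in two dimensions. Either $\Lambda(s)$ stays below a large threshold $K$, or, where $\Lambda>K$ with $K$ much larger than $\sup|a|$, a sharper computation gives $\Lambda'\le -(\Lambda-K_0)\cdot(\text{something positive})$ and $\|w\|$ decays like $e^{-K s}$. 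In the latter case the trajectories approach each other exponentially fast in backward time, i.e.\ separate exponentially as $s\to-\infty$. This contradicts boundedness of $\AA$ in $L^2$ together with the fact that $\|w(s)\|\ge\|w(0)\|$ along sub-intervals where $\Lambda$ is large.

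Concretely, I would prove a two-sided differential inequality showing that the set $\{\Lambda\le K\}$ is forward invariant for $K$ large. If $\Lambda(0)>K$, then $\Lambda>K$ on all of $(-\infty,0]$, so $\|w(s)\|\ge e^{(K-\|a\|_\infty)|s|}\|w(0)\|$, which is unbounded as $s\to-\infty$ unless $w\equiv0$. This gives $\Lambda(0)\le K$, and that is the step where the two-dimensional $H^2$ bounds of $\AA$ are used essentially.
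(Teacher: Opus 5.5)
\textbf{The gap is in the step you flag as the main obstacle.} It is not true that $\{\Lambda\le K\}$ is forward invariant for every large $K$. Nor is it true that $\Lambda'$ becomes negative once $\Lambda$ is much larger than $\alpha\equiv\sup_{|r|\le R}|f'(r)|$, where $R$ is an $L^\infty$-bound on $\AA$, so that $|a|\le\alpha$. With $A=-\Delta$, the exact identity for your Dirichlet quotient is
\[
\Lambda'=\frac{-2\|(A-\Lambda)w\|_{L^2}^2+2\langle (A-\Lambda)w,aw\rangle_{L^2}}{\|w\|_{L^2}^2}\le -2X^2+2\alpha X,
\qquad X\equiv\frac{\|(A-\Lambda)w\|_{L^2}}{\|w\|_{L^2}},
\]
and the size of $\Lambda$ does not enter. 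The only mechanism that forces $\Lambda'<0$ at $\Lambda=K$ is $X>\alpha$. This requires $K$ to be far from the spectrum of $-\Delta$.

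Suppose $K$ lies in a gap $(\lambda_n,\lambda_{n+1})$ that is narrow compared with $\alpha$, or coincides with an eigenvalue. Take $w$ supported on the two adjacent eigenspaces, for which $X^2=(K-\lambda_n)(\lambda_{n+1}-K)$. Such a $w$ sits at $\Lambda=K$ with $X\le\alpha$, and knowing only $|a|\le\alpha$, nothing stops the coupling term from pushing $\Lambda$ above $K$. On the unit torus there are infinitely many consecutive eigenvalues only $4\pi^2$ apart, e.g.\ $4\pi^2n^2$ and $4\pi^2(n^2+1)$.

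So neither the claimed invariance nor an inequality like $\Lambda'\le-(\Lambda-K_0)\cdot(\text{positive})$ holds for generic large $K$. Also, the $H^2$ bounds on $\AA$ are not what makes this step work; they only make $\alpha$ finite.

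\textbf{The missing ingredient is a spectral gap condition.} Pick $n$ with $\lambda_{n+1}-\lambda_n>2\alpha$ and set $K=(\lambda_n+\lambda_{n+1})/2$. Then $\Lambda=K$ forces $X\ge(\lambda_{n+1}-\lambda_n)/2>\alpha$, hence $\Lambda'<0$, so $\{\Lambda\le K\}$ is forward invariant. Since also $K>\alpha$, your backward-orbit argument along complete trajectories in $\AA$ then correctly gives $\|\nabla(\theta-\vartheta)\|_{L^2}^2\le K\|\theta-\vartheta\|_{L^2}^2$.

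Such an $n$ exists because of the genuinely two-dimensional periodic fact that $\lambda_{k+1}-\lambda_k$ is unbounded (Section 15.4.2 of \cite{Robinson2001}). The paper uses this fact too, but indirectly. It enters as the gap condition producing the inertial manifold of Theorem \ref{TheorManifold}: a map $\Phi$, Lipschitz from $L^2$ into $H^2$, whose graph contains $\AA$. Proposition \ref{PropReversePoin} is then read off from this graph.

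Once the gap is inserted, your route is a legitimate and more self-contained alternative. It is the cone-invariance (strong squeezing) mechanism behind the inertial manifold, applied directly to pairs of complete trajectories. It bypasses Theorem \ref{TheorManifold}, and with it Theorem \ref{TheorAbsorbH2} and Lemmas \ref{LemgH2}--\ref{LemDiffR}, and it gives the explicit constant $K$.

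Finally, your log-convexity step is superfluous, and it does not need any bound on $\partial_s a$. By invariance of $\AA$ the reverse Poincar\'e inequality holds at every time, so $\frac{d}{ds}\|w\|_{L^2}^2\ge-2(K+\alpha)\|w\|_{L^2}^2$. Gr\"onwall then gives the theorem directly, exactly as in the paper.
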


An exact value for the constant $c$ is given in Theorem \ref{TheorStability}. The proof is inspired by Theorem 1B in \cite{NicTit2024} for the Navier-Stokes equations where one assumes a reverse Poincar\'e inequality $\|\nabla(\theta-\vartheta)\|_{L^2} \lesssim \|\theta-\vartheta\|_{L^2}$. For $\theta, \vartheta$ belonging to the global attractor $\AA$ of the PDE~(\ref{eq:ReacDiff}), this inequality  holds true as we show in Proposition \ref{PropReversePoin} below. The proof relies on certain properties of the distribution of the eigenvalues of the Laplacian and of the non-linearity $F$ featuring in (\ref{pde0}) which are specific to the two-dimensional periodic situation. Proving similar results for other dimensions or the Navier-Stokes equations remains a formidable challenge, but we conjecture that similar theorems do hold true in these settings as well.

\begin{Remark}[Discussion of Condition \ref{Condf}] \label{dimitridoesallthework} 
Because $f$ is assumed to be continuous over $\R$ it is locally bounded, and then a straightforward calculation shows that the two-sided inequality in~(\ref{Condf}) equivalently rewrites in the following more interpretable form: 
$$
-a-b_1~{\rm sign}(s)|s|^{p-1}
\le 
f(s)
\le 
a-b_2~{\rm sign}(s)|s|^{p-1}
,
$$ 
for all $s\in\R$ and some $a\ge 0$, $b_i > 0$, $p>2$. Examples of such reaction functions $f$ also satisfying the second condition in (\ref{Condf}) include $f(s)=\sum_{i=1}^{p-2} h_i(s) s^i + c_p s |s|^{p-1}$ with $p \ge 3$ and negative $c_p<0$, for smooth bounded and Lipschitz functions $h_i$'s. 

Let us remark, to avoid triviality, that for such $f$ the global attractor $\mathcal A$ exists (Proposition \ref{PropAttractor}) and can further be seen to have non-trivial dimension: This is granted for instance for $f$ satisfying (\ref{Condf}), as soon as $f'(0)$ is sufficiently large and $f(0)=0$. Indeed, Theorems~9.4 and 9.5 in \cite{BV83} imply that $\AA$ has positive Hausdorff dimension as soon as $f$ is of the form $f(s)=\lambda s + h(s)$ with sufficiently large~$\lambda$ and smooth $h$ satisfying (\ref{Condf}) in place of $f$ and such that $h(0)=h'(0)=0$. This can be seen by applying these results with $f$ there given by $-h(s)$, $n=2$, $\lambda$ large enough, and $g=g_1=0$, also taking note that $h(u)\in C^\ell(\Om)$ for any $\ell\in\N$ since $h$ is smooth, and hence so is $u$ (see Theorem 11.8 in \cite{Robinson2001}). In particular, $\AA$ has non-trivial dimension for $f(s)=\sum_{i=1}^p c_i s^i$ with $p$ odd, $c_p<0$ and large enough $c_1$, and more generally for $f(s) = \sum_{i=1}^{p-1} h_i(s) s^i + c_p s^p$ with $p\ge 3$ odd, $h_i$ as before, $c_p<0$, and $h_1'(0)$ large enough. 

The intuition behind hypothesis (\ref{Condf}) is, one the one hand, that the forcing $f(u)$ behaves like $-|u|^{p-1}$ when $|u|$ is large, thus `taking away energy in the system' to prevent blowups. On the other hand, when $|u|$ is close to zero, the forcing $f(u)$ behaves like $f'(0)u$, preventing the dynamics to die out by adding energy proportional to $f'(0)$.

\end{Remark}

\section{Proofs of main results}



\subsection{Posterior contraction for two priors}

We now construct two prior distributions, one on $\mathcal A$ and one on the slightly larger inertial manifold $\mathcal M \supset \mathcal A$ (to be constructed in Section \ref{sec:Manifold} below). We then derive contraction rates for the posterior distributions arising from measurements (\ref{model}), following ideas in Bayesian nonparametric statistics, see \cite{GV17}, Chapter 7.3 in \cite{ GinNic2016}, or Chapter 1 in \cite{NicklEMS}. These in turn imply the existence of estimators in Theorem \ref{mainintro}. 

\subsubsection{$\ve$-net prior}

The global attractor $\AA$ from Proposition \ref{PropAttractor} has finite-dimensional $L^2(\Om)$-covering numbers: 
\begin{equation}\label{CoverEstimate}
N(\AA, \|\cdot\|_{L^2},\ve) 
\lesssim 
\ve^{-D} 
,~~
\ve > 0
,
\end{equation}
for some finite $D$; this follows from Definition 13.1 and Theorem 13.19 in \cite{Robinson2001} whose proof applies in the periodic setting as well. Fix $\ve>0$ and an integer $n_\ve \asymp \ve^{-D}$. Let $a_1, \ldots, a_{n_\ve}\in\AA$ be an $\ve$-covering of $\AA$. Define $\Pi_\ve$ as the uniform distribution in $L^2(\Om)$ over $\{a_1,\ldots, a_{n_\ve}\}$.
For any arbitrary real $\tau >0$, define sequences
$$
\ve_j
=
\frac{1}{j^{\tau}}
,\spa 
c_j \propto \frac{1}{j^{1+\tau}}
,\spa 
j\ge 1 
,
$$
such that $\sum_{j\ge 1} c_j = 1$. We then define the prior $\Pi$ as a weighted combination of $\Pi_{\ve_j}$
\begin{equation}\label{PriorNet}
\Pi 
\equiv
\sum_{j=1}^\infty c_j \Pi_{\ve_j}
.
\end{equation}
This defines a probability measure supported on $\AA$ and, given data from (\ref{model}), gives rise to the posterior measure $\Pi(\cdot|Z^{(N)})$, also supported on $\AA$, obtained as 
$$
d\Pi(a|Z^{(N)}) 
\propto 
e^{\ell_N(a)} d\Pi(a) 
,\spa 
\ell_N(a) 
=
-\frac12 \sum_{i=1}^N |Y_i-u_{a}(t, X_i)|^2 
, ~~a \in \AA.
$$
We can now establish the following contraction result for the posterior measure $\Pi(\cdot| Z^{(N)})$.

\begin{Theor}\label{TheorContractNet}
Assume that $\theta_0\in \AA$ and consider i.i.d.~data $Z^{(N)}=(Y_i, X_i)_{i=1}^N$ obtained through~(\ref{model}) for some $t>0$ where $u_\theta$ solves the PDE (\ref{eq:ReacDiff}) with known $f \in C^3(\R)$ satisfying~(\ref{Condf}).
Let $\delta_N \equiv \sqrt{\log(N)/N}$. Then, there exists $M>0$ such that we have in $P_{\theta_0}^\N$-probability as $N\to \infty$
$$
\Pi\Big( a\in\AA : \|u_{a}(t)-u_{\theta_0}(t)\|_{L^2} \le M \delta_N~ |~ Z^{(N)} \Big)
\to 
1
.
$$ 
\end{Theor}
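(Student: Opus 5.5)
We follow the general contraction theorem for regression models with Gaussian noise (Theorem 1.3.2 in \cite{NicklEMS}, going back to \cite{GV17}). We apply it with the forward map $\GG(a)=u_a(t)$, parameter space $\AA$, and rate $\delta_N=\sqrt{\log N/N}$. Three ingredients are needed. First, a uniform bound $\sup_{a\in\AA}\|u_a(t)\|_{\infty}\le U<\infty$, so that the Hellinger and Kullback--Leibler divergences between $P_a$ and $P_{\theta_0}$ are comparable to $\|u_a(t)-u_{\theta_0}(t)\|_{L^2}^2$. Second, a small-ball lower bound $\Pi(a:\|u_a(t)-u_{\theta_0}(t)\|_{L^2}\le\delta_N)\ge e^{-CN\delta_N^2}$. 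Third, the existence of sieves $\Theta_N\subset\AA$ with $\Pi(\AA\setminus\Theta_N)\le e^{-(C+4)N\delta_N^2}$ and $\log N(\GG(\Theta_N),\|\cdot\|_{L^2},\delta_N)\lesssim N\delta_N^2$. Given these, the testing argument of \cite{GV17} yields the displayed contraction with $M$ large enough.

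\textbf{Step 1: uniform boundedness.} $\AA$ is compact and invariant, and by the regularity results reviewed with Proposition \ref{PropAttractor} (smoothing, Theorem 11.8 in \cite{Robinson2001}) it is bounded in $H^2(\Om)\embed L^\infty(\Om)$ on the 2D torus. Invariance gives $u_a(t)\in\AA$, so $\sup_{a\in\AA}\|u_a(t)\|_\infty<\infty$.

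\textbf{Step 1b: forward Lipschitz estimate.} I will use Proposition \ref{PropSolExist} to get $\|u_a(t)-u_{\theta_0}(t)\|_{L^2}\le e^{Lt}\|a-\theta_0\|_{L^2}$. Formally, test the difference equation against $w=u_a-u_{\theta_0}$ and use the one-sided bound $f'\le L$.

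\textbf{Step 2: small-ball bound.} Choose $j$ with $\ve_j=j^{-\tau}\le\delta_N e^{-Lt}$, i.e.\ $j\asymp(\delta_N^{-1}e^{Lt})^{1/\tau}$, which is polynomial in $N$. The $\ve_j$-net contains some $a_i$ with $\|a_i-\theta_0\|_{L^2}\le\ve_j$, since $\theta_0\in\AA$. Hence
\[
\Pi\big(\|u_a(t)-u_{\theta_0}(t)\|_{L^2}\le\delta_N\big)\ge c_j\, n_{\ve_j}^{-1}\gtrsim j^{-1-\tau} j^{-\tau D}=N^{-O(1)}=e^{-O(\log N)}.
\]
Since $N\delta_N^2=\log N$, this is at least $e^{-CN\delta_N^2}$. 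This is exactly why the rate carries the $\log N$ factor.

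\textbf{Step 3: sieve and entropy.} Take $\Theta_N=\AA$ itself, so no mass is lost. By \eqref{CoverEstimate} and the forward Lipschitz estimate,
\[
\log N(\GG(\AA),\|\cdot\|_{L^2},\delta_N)\le\log N(\AA,\|\cdot\|_{L^2},e^{-Lt}\delta_N)\lesssim D\log(1/\delta_N)\lesssim\log N=N\delta_N^2.
\]
Tests with exponentially small errors then exist by the standard Hellinger/$L^2$ construction for bounded regression functions, using Step 1.

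\textbf{Main obstacle.} The main delicate point is Step 1 together with the KL/variance comparison. In the posterior-ratio denominator we need $E\,\ell$ and $\mathrm{Var}\,\ell$ bounds of order $\|u_a(t)-u_{\theta_0}(t)\|_{L^2}^2$. For Gaussian noise these follow once sup-norms are uniformly bounded on $\AA$, so the work reduces to citing $L^\infty$ (or $H^2$) boundedness of the attractor in 2D. I would obtain it from absorbing-ball estimates in $H^1$ and then $H^2$ for the reaction–diffusion equation with $f\in C^3$ satisfying \eqref{Condf}. Once this holds, the conclusion follows directly from the general theorem.
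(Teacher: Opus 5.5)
Your proposal is correct and follows essentially the same route as the paper: Theorem 1.3.2 of \cite{NicklEMS} with $\Theta_N=\AA$, uniform boundedness from the $H^2$-bound on the invariant attractor, entropy via the forward $L^2$-Lipschitz estimate combined with \eqref{CoverEstimate}, and a polynomial prior-mass lower bound from the $\ve_j$-nets. The differences are cosmetic. You lower-bound the prior mass by a single mixture component $c_j/n_{\ve_j}$, giving $\delta_N^{D+1+1/\tau}$, whereas the paper sums over all $j$ with $\ve_j\le\delta_N/C_t$ to get $\delta_N^{D+1}$. You also derive the Lipschitz constant $e^{Lt}$ directly from $f'\le L$ instead of citing Proposition~\ref{PropSolExist}. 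Both prior-mass bounds are $e^{-O(N\delta_N^2)}$ because $N\delta_N^2=\log N$.
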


\begin{Proof}{Theorem \ref{TheorContractNet}}
We apply Theorem 1.3.2 in \cite{NicklEMS} with parameter space $\Theta\equiv \AA$, constant regularization sets $\Theta_N \equiv \AA$ satisfying $\Pi(\Theta_N^c)=0$ since $\Pi(\AA)=1$, and $\delta_N$ as in the statement. Also note that $u_\theta(t) \in \mathcal A$ consists of functions uniformly bounded by a fixed constant $U$ in view of Proposition \ref{PropAttractor} and the Sobolev embedding $H^2(\Om) \embed L^\infty(\Om)$. First observe that the `intrinsic' semimetric $d_\GG$ there is given by 
$$
d_\GG(\theta, \theta')
\equiv 
\|u_\theta(t)-u_{\theta'}(t)\|_{L^2(\Om)}, ~t>0.
$$
Proposition \ref{PropSolExist} entails that the map $\theta\mapsto u_\theta(t)$ is $L^2$-Lipschitz continuous, with Lipschitz constant $C_t$. We can thus upper-bound the covering numbers 
\begin{equation}\label{lipest}
\log N(\AA, d_\GG, \delta_N) 
\leq 
\log N(\AA, \|\cdot\|_{L^2}, \delta_N/C_t) 
\lesssim \log(1/\delta_N)
\lesssim N\delta_N^2
,
\end{equation}
by virtue of the covering estimate (\ref{CoverEstimate}). It remains to establish a small ball estimate for the measurable sets
$$
\BB_N 
=
\Big\{ a\in \AA : \|u_a(t)-u_{\theta_0}(t)\|_{L^2(\Om)} \le \delta_N \Big\} 
.
$$
Defining the ball
$$
\BB'_N 
=
\Big\{ a\in \AA : \|a-\theta_0\|_{L^2} \le \delta_N/C_t \Big\} 
,
$$
with $C_t$ as above yields $\Pi(\BB_N)
\ge  \Pi(\BB'_N)$. To lower bound $\Pi(\BB'_N)$ observe that, for any $N\ge 1$ fixed and for all $j$ large enough such $\ve_j\le \delta_N/C_t$, the $\ve_j$-covering $a_1,\ldots, a_{n_{\ve_j}}$ constructed previously admits at least one element $a_i$ in $\BB'_N$. Since $\Pi_{\ve_j}$ is uniformly distributed over $n_{\ve_j}$ elements, we find 
$$
\Pi(\BB'_N)
\ge 
\sum_{j:\ve_j\le \delta_N/C_t} c_j \Pi_{\ve_j}(\BB'_N)
\ge 
\sum_{j:\ve_j\le \delta_N/C_t} \frac{c_j}{n_{\ve_j}}
.
$$
Recalling that $n_{\ve_j}\asymp \ve_j^{-D}$, $\ve_j = j^{-\tau}$ and $c_j \propto j^{-(1+ \tau)}$, a straightforward calculation provides 
$$
\Pi(\BB_N') 
\gtrsim 
\sum_{j\ge (\delta_N/C_t)^{-\frac{1}{\tau}}} \frac{1}{j^{1+\tau + D\tau}}
\gtrsim 
\delta_N^{D+1}
\ge 
e^{-AN\delta_N^2}
,
$$
for $A>0$ large enough. Theorem 1.3.2 in \cite{NicklEMS} thus implies the conclusion of the theorem.
\end{Proof}

\subsubsection{Finite-dimensional parametrization and the inertial manifold}

The negative Laplacian $-\Delta$ has eigenfunctions $e_j$ forming an orthonormal basis of $L^2(\Omega)$ with corresponding eigenvalues $\lambda_j$ satisfying $0=\lambda_0 \le \ldots \le \lambda_{j-1} \le \lambda_j \asymp j$ as $j \to \infty$. On the Sobolev space $H^2(\Omega)$ we consider now the equivalent sequence norm $\|u\|^2_{\tilde H^2} \equiv \sum_{j \ge 0}(1+\lambda_j)^2 \langle e_j, u \rangle_{L^2}^2$. Also, for any $n\in\N$, let $P_n$ denote the projection of $u \in L^2(\Omega)$ onto the first $n$ eigenfunctions of $-\Delta$, \ie 
\begin{equation}\label{projn}
P_n[u]
\equiv 
\sum_{j=0}^{n-1} \ps{u}{e_j}_{L^2} e_j 
,
\end{equation}
and $Q_n[u]=u-P_n[u]$ the projection onto its orthogonal complement.\smallskip

Let $\rho>0$ be an $H^2$-bound on $\AA$ (see after Theorem \ref{TheorManifold}) and let $B_{H^2}(8\rho)$ be the centred $H^2$-ball with radius $8\rho$. Theorem \ref{TheorManifold} below entails the existence of an $L^2$-Lipschitz map $\Phi:P_n[L^2(\Om)]\to Q_n[L^2(\Om)]$ such that any $u\in\AA$ writes (uniquely) as $u=\Phi(u_n)+u_n$ where $u_n\equiv P_n[u]$. In addition, $\Phi$ can be taken to be supported on the $n$-dimensional ellipsoid \begin{equation}\label{defDPhi}
D(\Phi) 
\equiv 
P_n[L^2(\Om)]\cap B_{H^2}(8\rho)
.
\end{equation}
Next, the map $F: D(\Phi)\to L^2(\Om)$ defined as 
$$
F(v) 
=
v+\Phi(v) 
,\spa 
v\in D(\Phi)
$$
is also $L^2$-$L^2$ Lipschitz. We thus define the `inertial manifold' $\MM\equiv F(D(\Phi))$ of the reaction-diffusion system, which is bounded in $H^2(\Om)$ and contains $\AA$ as a subset -- see also (\ref{defInerMan}). To construct a prior on $\MM$, first define a random vector $X=(X_1,\ldots, X_n)$ taking values in the ellipsoid $D(\Phi)$ seen as a subset of $\R^n$ with a continuous and (strictly) positive density with respect to the Lebesgue measure.
By abuse of notation we also denote by $X$ the corresponding random field 
$$
X
\equiv 
\sum_{j=1}^n 
X_j e_j 
,
$$
taking values in the function space $P_n[L^2(\Om)]$. We then take the prior $\Pi$ as 
\begin{equation}\label{Prior2}
\Pi 
\equiv 
\text{Law}(F(X))
~
\text{in}~ L^2(\Om)
.
\end{equation}
This defines a probability measure supported on $\MM$ and, given data from (\ref{model}), gives rise to the posterior measure $\Pi(\cdot|Z^{(N)})$, also supported on $\MM$, obtained as 
$$
d\Pi(a|Z^{(N)}) 
\propto 
e^{\ell_N(a)} d\Pi(a) 
,\spa 
\ell_N(a) 
=
-\frac12 \sum_{i=1}^N |Y_i-u_{a}(t, X_i)|^2 
,~~ a \in \mathcal M.
$$
We now establish the contraction result below for the posterior measure $\Pi(\cdot| Z^{(N)})$.\smallskip

Because the support of $\Phi$, hence also that of $F$, is contained in a compact subset of $(P_n[L^2(\Om)], \|\cdot\|_{L^2})$ (see after Theorem \ref{TheorManifold}), and since $F$ is a continuous map into $L^2(\Om)$, then $\MM$ is also a compact subset of $L^2(\Om)$. We will need to make this quantitative: Since $\MM=F(D(\Phi))$, 
$F$ is $L^2-L^2(\Om)$ Lipschitz, and $D(\Phi)$ is an $n$-dimensional ellipsoid in $L^2(\Om)$, we can control the covering numbers of $\MM$ as 
\begin{equation}\label{CoverEstimateManifold}
N(\MM, \|\cdot\|_{L^2},\ve) 
\leq 
N(D(\Phi), \|\cdot\|_{L^2}, C\ve) 
\lesssim 
\ve^{-n} 
,\spa 
\ve>0 ,
\end{equation}
for some $C>0$, for instance by using results in Sec.~4.3.7 in \cite{GinNic2016}. We can now prove the following result.

\begin{Theor}\label{TheorContractNet2}
Assume that $\theta_0\in \AA$ and consider i.i.d.~data $Z^{(N)}=(Y_i, X_i)_{i=1}^N$ obtained through~(\ref{model}) for some $t>0$ where $u_\theta$ solves the PDE (\ref{eq:ReacDiff}) with known $f \in C^3(\R)$ satisfying~(\ref{Condf}).
Let $\delta_N \equiv \sqrt{\log(N)/N}$. Then, there exists $M>0$ such that we have in $P_{\theta_0}^\N$-probability as $N\to \infty$
$$
\Pi\Big( a\in\MM : \|u_a(t)-u_{\theta_0}(t)\|_{L^2} \le M \delta_N~ |~ Z^{(N)} \Big)
\to 
1
.
$$ 
\end{Theor}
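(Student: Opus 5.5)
The plan is to repeat the argument used for Theorem \ref{TheorContractNet}, now invoking Theorem 1.3.2 in \cite{NicklEMS} with parameter space $\Theta \equiv \MM$. We take constant regularisation sets $\Theta_N \equiv \MM$, so that $\Pi(\Theta_N^c)=0$ because $\Pi$ is the law of $F(X)$ with $X \in D(\Phi)$. The rate is $\delta_N=\sqrt{\log N/N}$ as before. The intrinsic semimetric is again $d_\GG(\theta,\theta')=\|u_\theta(t)-u_{\theta'}(t)\|_{L^2}$.

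Uniform boundedness of the regression functions $u_a(t)$, $a\in\MM$, is needed and can be obtained in either of two ways.
\begin{enumerate}
\item Since $\MM$ is bounded in $H^2(\Om)$ by construction, and $S(t)$ maps $H^2$-bounded sets into $H^2$-bounded sets (Proposition \ref{PropSolExist}, or the absorbing-ball estimates behind Proposition \ref{PropAttractor}), the embedding $H^2\embed L^\infty$ gives a uniform bound $U$.
\item Alternatively, the dissipativity in (\ref{Condf}) gives an $L^\infty$-bound on $u_a(t)$ for $t>0$ that is uniform over $L^2$-bounded initial data.
\end{enumerate}

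For the entropy condition, the $L^2$-Lipschitz continuity of $\theta\mapsto u_\theta(t)$ with constant $C_t$ (Proposition \ref{PropSolExist}) and the covering bound (\ref{CoverEstimateManifold}) give
$$\log N(\MM,d_\GG,\delta_N)\le \log N(\MM,\|\cdot\|_{L^2},\delta_N/C_t)\lesssim n\log(1/\delta_N)\lesssim N\delta_N^2.$$
Here $n$ is fixed, independent of $N$.

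The small-ball condition is the step that actually changes. Since $\theta_0\in\AA\subset\MM$, we can write $\theta_0=F(v_0)$ with $v_0=P_n\theta_0$. The point $v_0$ lies in $D(\Phi)$, because $\|v_0\|_{H^2}\le\|\theta_0\|_{H^2}\le\rho<8\rho$. Let $L_F$ be the Lipschitz constant of $F$. For $v\in D(\Phi)$ with $\|v-v_0\|_{L^2}\le \delta_N/(C_tL_F)$ we get $\|u_{F(v)}(t)-u_{\theta_0}(t)\|_{L^2}\le\delta_N$. Hence
$$\Pi(\BB_N)\ge \P\big(X\in D(\Phi),\ \|X-v_0\|_{\R^n}\le \delta_N/(C_tL_F)\big),$$
identifying $P_n[L^2]$ isometrically with $\R^n$ via the coefficients.

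The main obstacle is lower-bounding this probability by a multiple of $\delta_N^n$. This needs two facts.
\begin{enumerate}
\item The density of $X$ is bounded below near $v_0$. This holds because the density is continuous and strictly positive on the ellipsoid. It is cleanest to use the compact closure, or, since $v_0$ lies in the interior ($\|v_0\|_{H^2}\le\rho<8\rho$), a neighbourhood of $v_0$ contained in $D(\Phi)$.
\item The Euclidean ball of radius $r_N=\delta_N/(C_tL_F)$ around $v_0$ lies inside $D(\Phi)$ for $N$ large. This follows from the interior point property together with the equivalence of the $H^2$ and $L^2$ norms on the $n$-dimensional space $P_n[L^2]$, where $\|v\|_{H^2}\le (1+\lambda_{n-1})\|v\|_{L^2}$.
\end{enumerate}

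Given these, $\Pi(\BB_N)\gtrsim r_N^n\gtrsim \delta_N^n\ge e^{-AN\delta_N^2}$ for $A$ large, since $\log(1/\delta_N)\lesssim \log N = N\delta_N^2$. Theorem 1.3.2 of \cite{NicklEMS} then yields the stated contraction.
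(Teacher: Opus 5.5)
Your proposal is correct and follows essentially the same route as the paper. It applies Theorem 1.3.2 of \cite{NicklEMS} with $\Theta_N=\MM$, bounds the entropy via the $L^2$-Lipschitz constant $C_t$ and (\ref{CoverEstimateManifold}), and gets a small-ball bound of order $\delta_N^n$ from $P_n\theta_0$ being an interior point of $D(\Phi)$ together with the positive continuous density of $X$.

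The differences from the paper are cosmetic. For the uniform $L^\infty$ bound on $u_a(t)$, the paper uses directly that $\MM$ is invariant under the flow, see (\ref{invM}), and bounded in $H^2$; this is cleaner than your appeal to $H^2$-propagation or $L^\infty$-smoothing, neither of which is established in the paper. The paper also tracks the equivalence constant between $\|\cdot\|_{H^2}$ and $\|\cdot\|_{\tilde H^2}$, enlarging $8\rho$ if needed. You instead implicitly take an $H^2$ norm that $P_n$ contracts.
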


\begin{Proof}{Theorem \ref{TheorContractNet}}
We apply Theorem 1.3.2 in \cite{NicklEMS} with parameter space $\Theta\equiv \MM$ given by the inertial manifold constructed in Section \ref{sec:Manifold}, constant regularization sets $\Theta_N \equiv \MM$ satisfying $\Pi(\Theta_N^c)=0$ since $\Pi(\MM)=1$, and $\delta_N$ as in the statement, noting also that $u_\theta(t), \theta \in \mathcal M,$ lies in $\mathcal M$ which is bounded in $H^2(\Om) \embed L^\infty(\Om)$. First observe that the `intrinsic' semimetric $d_\GG$ there is given by 
$$
d_\GG(\theta, \theta')
\equiv 
\|u_\theta(t)-u_{\theta'}(t)\|_{L^2(\Om)},~~ t>0
.
$$
Proposition \ref{PropSolExist} entails that the map $\theta\mapsto u_\theta(t)$ is $L^2$-Lipschitz continuous, with Lipschitz constant $C_t>0$, so as in (\ref{lipest}) we can upper-bound the covering numbers 
$$
\log N(\MM, d_\GG, \delta_N) 
\leq
\log N(\MM, \|\cdot\|_{L^2}, \delta_N/C_t) \lesssim N\delta_N^2
,
$$
by virtue of the estimate (\ref{CoverEstimateManifold}). It remains to establish a small ball estimate for the sets
$$
\BB_N 
=
\Big\{ \theta\in \MM : \|u_\theta(t)-u_{\theta_0}(t)\|_{L^2} \le \delta_N \Big\} 
,
$$
Defining the ball
$$
\BB'_N 
=
\Big\{ \theta\in \MM : \|\theta-\theta_0\|_{L^2} \le \delta_N/C_t\Big\} 
,
$$
yields $\Pi(\BB_N)
\ge  \Pi(\BB'_N).$
Since $\theta_0\in \AA$ and the inertial manifold $\MM=F(D(\Phi))$ contains $\AA$, there exists $\bar \theta_0\in D(\Phi)$, $\bar \theta_0 = \sum_{j=1}^n \beta_j e_j$ say, such that $\theta_0 = F(\bar \theta_0)$.
We now argue that $\bar\theta_0$ is an interior point of $D(\Phi)$. Let $c\ge 1$ be a universal constant such that $c^{-1} \|\cdot\|_{H^2}\le \|\cdot\|_{\tilde H^2}\le c\|\cdot\|_{H^2}$. Then, $\|\theta_0\|_{\tilde H^2}\le c\|\theta_0\|_{H^2}\le c\rho$ because $\theta_0\in\AA$. Writing $\theta_0=\bar \theta_0 + \Phi(\bar \theta_0)$, then Parseval's identity yields $\|\bar \theta_0\|_{\tilde H^2}\le \|\theta_0\|_{\tilde H^2}\le c\rho$ since $\bar \theta_0\in P_n[L^2(\Om)]$ and $\Phi(\bar \theta_0)$ lies in the orthogonal complement of $P_n[L^2(\Om)]$. In particular, we have $\|\bar \theta_0\|_{H^2}\le c\|\bar \theta_0\|_{\tilde H^2}\le c^2 \rho$. Therefore, up to increasing the radius $8\rho$, featuring in the definition of $D(\Phi)$ in (\ref{defDPhi}), to a value that is larger than $c^2\rho$, we may assume without loss of generality that
$\bar \theta_0$ belongs to the interior of $D(\Phi)$. Because $F$ is $L^2$-Lipschitz, with Lipschitz constant $L_F$, then
$$
\Pi(\BB_N') 
=
\P\big(\|F(X)-F(\bar \theta_0)\|_{L^2}\le C^{-1} \delta_N\Big) 
\ge
\P\big(\|X-\bar \theta_0\|_{L^2}\le (L_F C)^{-1} \delta_N\big)
.
$$
Let $p$ denote the positive continuous density of $X$ in $D(\Phi)$. Taking $N$ large enough so that the $L^2$-ball $E_N$, centred at $\bar \theta_0$ with radius $(L_F C)^{-1} \delta_N$, is contained in $D(\Phi)$, the probability in the last display is of order
$$
\int_{E_N} p(x)\, dx 
=
p(\bar \theta_0) \omega_n \big((L_FC)^{-1} \delta_N\big)^n
+
o(\delta_N^n)
\equiv
L\delta_N^n + o(\delta_N^n)
,
$$
for some $L>0$ and where $\omega_n$ is the volume of the unit ball of $\R^n$. We thus find for all $N$ large enough that there exists $A>0$ such that
$$
\Pi(\BB_N) 
\gtrsim 
\delta_N^{n}
\ge 
e^{-AN\delta_N^2}
.
$$ 
Theorem 1.3.2 in \cite{NicklEMS} now yields the conclusion.
\end{Proof}

\subsection{Lipschitz stability for the inverse problem}


We first prove the reverse Poincar\'e inequality for the attractor of our PDE, which is the key result.

\begin{Prop} \label{PropReversePoin}
There exists a constant $C>0$ such that 
$$
\|\nabla(\theta-\vartheta)\|_{L^2}
\le
C \|\theta-\vartheta\|_{L^2}
,\spa 
\forall\ \theta,\vartheta\in \AA
.
$$
\end{Prop}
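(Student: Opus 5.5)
We plan to use the inertial manifold structure from Theorem \ref{TheorManifold}, which the paper has already invoked: for $n$ large enough there is an $L^2$-Lipschitz map $\Phi:P_n[L^2(\Om)]\to Q_n[L^2(\Om)]$ such that every $u\in\AA$ can be written as $u=P_n u+\Phi(P_n u)$. The key quantitative fact needed from that construction is a Lipschitz bound for $\Phi$ in the gradient norm. Concretely, we would aim for
$$
\|\nabla(\Phi(p)-\Phi(q))\|_{L^2}\le \ell\,\|\nabla(p-q)\|_{L^2}
$$
with some $\ell<\infty$. If Theorem \ref{TheorManifold} instead produces $\Phi$ as a fixed point of the Lyapunov--Perron map in an $H^1$-type (or $D((-\Delta)^{1/2})$) setting, which is the natural choice for reaction--diffusion, then this bound comes for free. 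We would rely on that version, using the fact that $\AA$ is bounded in $H^2\embed L^\infty$. This boundedness lets us replace $f$ by a globally Lipschitz $C^3$ truncation $\tilde f$ that agrees with $f$ on the range of $\AA$, so the spectral gap condition can be met.

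With this in hand the argument is short. Take $\theta,\vartheta\in\AA$ and set $p=P_n\theta$, $q=P_n\vartheta$. Since the modes are orthogonal,
$$
\|\nabla(\theta-\vartheta)\|_{L^2}^2=\|\nabla(p-q)\|_{L^2}^2+\|\nabla(\Phi(p)-\Phi(q))\|_{L^2}^2\le (1+\ell^2)\|\nabla(p-q)\|_{L^2}^2 .
$$
On the finite-dimensional space $P_n[L^2(\Om)]$ the gradient norm is bounded by $\lambda_{n-1}^{1/2}$ times the $L^2$ norm, so $\|\nabla(p-q)\|_{L^2}\le \lambda_{n-1}^{1/2}\|p-q\|_{L^2}\le\lambda_{n-1}^{1/2}\|\theta-\vartheta\|_{L^2}$. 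This gives the claim with $C=\sqrt{(1+\ell^2)\lambda_{n-1}}$.

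The main obstacle is the existence of $\Phi$ with the gradient-Lipschitz property, that is, satisfying the spectral gap condition. In the Lyapunov--Perron or cone-invariance approach with nonlinearity $\tilde f$ of Lipschitz constant $L_{\tilde f}$ from $H^1$ to $L^2$ (acting as the Nemytskii operator, which is Lipschitz $L^2\to L^2$), one needs gaps satisfying $\lambda_{n}-\lambda_{n-1}>c\,L_{\tilde f}(\lambda_n^{1/2}+\lambda_{n-1}^{1/2})$. Because the nonlinearity loses no derivatives, the weaker requirement $\lambda_n-\lambda_{n-1}>2L_{\tilde f}$ already suffices. On the 2D torus the eigenvalues $4\pi^2(k_1^2+k_2^2)$ are integers times $4\pi^2$, so consecutive distinct eigenvalues differ by at least $4\pi^2$. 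This is not enough for an arbitrary $L_{\tilde f}$, so we would use the number-theoretic fact that sums of two squares have arbitrarily large gaps. This is exactly the two-dimensional periodic feature the paper alludes to. We would pick $n$ at such a gap and verify the cone condition $\|\nabla Q_n w\|\le \|\nabla P_n w\|$ for the difference $w=u_\theta-u_\vartheta$ of complete trajectories on $\AA$. This is done by differentiating $\|\nabla Q_nw\|^2-\|\nabla P_nw\|^2$ in time and using invariance of $\AA$ backwards in time to rule out the alternative case $\|\nabla Q_n w\|>\|\nabla P_n w\|$, which would force exponential growth as $t\to-\infty$ and contradict boundedness of $\AA$. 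This cone argument in fact gives the inequality directly, without needing $\Phi$ globally, and is the route we would try first.
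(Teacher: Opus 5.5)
The step that fails as written is your use of the gradient seminorm in place of a norm. On the torus $\lambda_0=0$ and $e_0$ is constant, so $\|\nabla\cdot\|_{L^2}$ does not see the mean.

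Since $\Phi$ takes values in $Q_n[L^2(\Om)]$, which contains no nonzero constants, your target bound $\|\nabla(\Phi(p)-\Phi(q))\|_{L^2}\le\ell\|\nabla(p-q)\|_{L^2}$ would force $\Phi(p)=\Phi(q)$ whenever $p-q$ is constant. That would mean the high modes of the manifold do not depend on the mean of the low modes. There is no reason for this, since $f(u+c)-f(u)$ is not spatially constant for non-affine $f$.

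The same defect breaks your cone argument. On the boundary $\|\nabla Q_nw\|_{L^2}=\|\nabla P_nw\|_{L^2}=r$, the term $\ps{\nabla(Q_nw-P_nw)}{\nabla(f(u)-f(v))}_{L^2}$ contains $(f'(u)-f'(v))\nabla v$. Its contribution $f''(v)\,\bar w\,\nabla v$ from the mean $\bar w=\int_\Om w$ is not bounded by a multiple of $r$, so no spectral gap makes the derivative negative.

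For your first route the repair is simply to use Theorem \ref{TheorManifold}(ii) as stated: $\|\nabla(\Phi(p)-\Phi(q))\|_{L^2}\lesssim\|\Phi(p)-\Phi(q)\|_{H^2}\le C\|p-q\|_{L^2}\le C\|\theta-\vartheta\|_{L^2}$. Combine this with your (correct) orthogonal splitting and $\|\nabla(p-q)\|_{L^2}\le\lambda_{n-1}^{1/2}\|p-q\|_{L^2}$. That is exactly the paper's proof, which bounds both pieces in $H^2$ rather than $H^1$. It needs no $H^1$-type Lyapunov--Perron construction and no truncation of $f$, because (ii) is already stronger than what you ask for.

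Your preferred cone route is genuinely different and can be made to work, but it needs two corrections.

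\emph{First}, replace $\|\nabla w\|_{L^2}$ by $\|A^{1/2}w\|_{L^2}$ with $A=\delta\,{\rm Id}-\Delta$, so that $\|A^{1/2}w\|_{L^2}^2=\delta\|w\|_{L^2}^2+\|\nabla w\|_{L^2}^2$.

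\emph{Second}, control the nonlinearity in $H^1$, not just in $L^2$. A gap of the form $\lambda_n-\lambda_{n-1}>cL_{\tilde f}$ is the $L^2$-cone condition and only yields an $L^2$-Lipschitz graph. A globally Lipschitz $\tilde f$ does not make $u\mapsto\tilde f(u)$ Lipschitz on $H^1$, because the term $(\tilde f'(u)-\tilde f'(v))\nabla v$ needs $\nabla v\in L^4$. What does hold, and suffices since you only compare trajectories inside $\AA$, is $\|A^{1/2}(f(u)-f(v))\|_{L^2}\le C_{\AA}\|A^{1/2}(u-v)\|_{L^2}$ for $u,v\in\AA$. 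This uses that $\AA$ is bounded in $H^2\embed L^\infty\cap W^{1,4}$ and that $H^1\embed L^4$; no truncation is then needed.

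With these changes, on the boundary $\|A^{1/2}Q_nw\|=\|A^{1/2}P_nw\|=r$ one gets $\frac{d}{dt}\big(\|A^{1/2}Q_nw\|^2-\|A^{1/2}P_nw\|^2\big)\le-2(\lambda_n-\lambda_{n-1}-2\sqrt2\,C_{\AA})r^2$. So one large gap of $\{\lambda_j\}$ suffices, your backward-growth contradiction goes through, and $\|\nabla w\|_{L^2}\le\sqrt{2(\delta+\lambda_{n-1})}\,\|w\|_{L^2}$ follows directly.

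Compared with the paper, this corrected route is self-contained. It uses only the $H^2$-boundedness and invariance of $\AA$ plus one spectral gap, and bypasses the Foias--Sell--Titi theorem, the $H^2$-absorbing ball of Theorem \ref{TheorAbsorbH2}, and Lemmas \ref{LemgH2}--\ref{LemDiffR}. However, it yields the estimate only on $\AA$ and does not produce the map $\Phi$. The paper needs $\Phi$ anyway, for the prior on $\MM$ and for extending the proposition and Theorem \ref{TheorStability} to $\MM$.
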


In fact, it follows from the proof that Proposition~\ref{PropReversePoin} also holds whenever $\theta,\vartheta\in \MM$.

\begin{Proof}{Proposition \ref{PropReversePoin}}
Theorem \ref{TheorManifold} entails that there exists $n\in\N$ and a map 
$$
\Phi : P_n[L^2(\Om)] \to Q_n[L^2(\Om)]
$$ 
such that any $u\in\AA$ admits the (orthogonal) decomposition $u=u_n+\Phi(u_n)$ with $u_n=P_n[u]$, and such that 
\begin{equation}\label{AContH2}
\|\Phi(v)-\Phi(w)\|_{H^2}
\le 
C\|v-w\|_{L^2}
,\spa 
\forall\ v,w\in P_n[L^2(\Om)]
,
\end{equation}
for some universal constant $C>0$. For all $\theta,\vartheta\in\AA$, let us write $\theta = \theta_n + \Phi(\theta_n)$ and $\vartheta = \vartheta_n + \Phi(\vartheta_n)$, where  $\theta_n=P_n[\theta]$ and $\vartheta_n=P_n[\vartheta]$. Then,
\begin{align*}
\|\nabla(\theta-\vartheta)\|_{L^2}
&\lesssim 
\|\theta-\vartheta\|_{H^2}
\\[2mm]
&\le 
\|\theta_n-\vartheta_n\|_{H^2} + \|\Phi(\theta_n)-\Phi(\vartheta_n)\|_{H^2}
\\[2mm]
&
\lesssim 
(1 + \lambda_n) \|\theta_n-\vartheta_n\|_{L^2} + C \| \theta_n - \vartheta_n\|_{L^2}
,
\end{align*}
where we used (\ref{AContH2}) and the equivalence $\|u\|^2_{H^2} \asymp \sum_{j\ge 0} (1+\lambda_j^2) \ps{u}{e_j}_{L^2}^2$ in the last inequality. Because $\|\theta_n-\vartheta_n\|_{L^2}\le \|\theta-\vartheta\|_{L^2}$, we obtain the reverse Poincaré inequality 
\begin{equation}\label{revPoincare}
\|\nabla(\theta-\vartheta)\|_{L^2}
\lesssim
\|\theta-\vartheta\|_{L^2}
,\spa 
\forall\ \theta,\vartheta\in \AA
,
\end{equation}
which concludes the proof.
\end{Proof}

The proof of the following theorem (which is Theorem~\ref{backstab} from the introduction) is based on the preceding proposition and inspired by Theorem 1B in \cite{NicTit2024}, but slightly easier since the attractor is invariant under the dynamics so that we know that the reverse Poincar\'e inequality from Proposition \ref{PropReversePoin} holds true at all times $t>0$. Inspection of the proof that follows shows that the conclusion holds whenever $\theta,\vartheta\in\MM$, where $\MM$ is the inertial manifold (containing $\AA$).

\begin{Theor}\label{TheorStability}
    There exists a constant $c>0$ depending only on $f$ such that
    $$
    \|\theta-\vartheta\|_{L^2} 
    \le 
    e^{ct} \|u_{\theta}(t)-u_{\vartheta}(t)\|_{L^2}
    ,\spa 
    \forall\ t\ge 0,\ 
    \forall\ \theta,\vartheta\in \AA 
    .
    $$
\end{Theor}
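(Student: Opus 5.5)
Set $w(s)=u_\theta(s)-u_\vartheta(s)$ for $s\in[0,t]$. We will show a \emph{backward} Gronwall inequality for $\|w(s)\|_{L^2}^2$, i.e. a lower bound $\frac{d}{ds}\|w\|_{L^2}^2\ge -2c\|w\|_{L^2}^2$. Integrating this on $[0,t]$ gives $\|w(t)\|_{L^2}^2\ge e^{-2ct}\|w(0)\|_{L^2}^2$, which is the claim. The key input is invariance: $S(s)\AA=\AA$, so $u_\theta(s),u_\vartheta(s)\in\AA$ for every $s\ge0$. Hence Proposition~\ref{PropReversePoin} applies at every time and gives
\[
\|\nabla w(s)\|_{L^2}\le C\|w(s)\|_{L^2}.
\]
Moreover Proposition~\ref{PropAttractor}, together with $H^2\embed L^\infty$, gives a uniform bound $\|u\|_{L^\infty}\le U$ on $\AA$.

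The difference solves $\partial_s w-\Delta w=f(u_\theta)-f(u_\vartheta)$. Since $f\in C^1$ and both solutions lie in $[-U,U]$, the mean value theorem gives $|f(u_\theta)-f(u_\vartheta)|\le K|w|$ with $K=\sup_{|r|\le U}|f'(r)|$. Testing with $w$ and integrating by parts on the torus,
\[
\frac12\frac{d}{ds}\|w\|_{L^2}^2=-\|\nabla w\|_{L^2}^2+\ps{f(u_\theta)-f(u_\vartheta)}{w}_{L^2}\ge -(C^2+K)\|w\|_{L^2}^2 .
\]
The first term is bounded using the reverse Poincar\'e inequality. The second term is bounded by Cauchy--Schwarz, and this needs two-sided control of $f'$, not only $f'\le L$. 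The $L^\infty$ bound provides it. This gives $c=C^2+K$, which depends only on $f$ through $\AA$ and $U$. To justify the differentiation, I would use that attractor trajectories are strong solutions, so $w\in L^2(H^1)$ with $\partial_s w\in L^2(H^{-1})$, and apply the Lions--Magenes identity. Alternatively one can argue with the integrated form of the energy identity.

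I expect the main obstacle to be the lower bound on the dissipation term $-\|\nabla w\|^2$. A forward estimate only needs $-\|\nabla w\|^2\le0$. The backward estimate, by contrast, requires controlling $\|\nabla w\|$ by $\|w\|$ uniformly along the whole trajectory, and this is exactly where Proposition~\ref{PropReversePoin} and the invariance of $\AA$ are used.

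The argument also covers $\theta,\vartheta\in\MM$, but only if $\MM$ is forward invariant and the reverse Poincar\'e inequality holds on $\MM$. The remark after Proposition~\ref{PropReversePoin} provides the latter, and I would take the former from the construction in Theorem~\ref{TheorManifold}.

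Alternatively, one can run the log-convexity argument of Theorem 1B in \cite{NicTit2024} on the Dirichlet quotient $\|\nabla w\|^2/\|w\|^2$. Here that quotient is already bounded by $C^2$ for all $s$, so the direct estimate above is enough.
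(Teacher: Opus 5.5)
Your proposal is correct and follows essentially the same route as the paper. Both arguments test the difference equation with $w$, and both use invariance of $\AA$ twice: for a uniform $L^\infty$ bound, hence a two-sided Lipschitz bound on $f$, and for the reverse Poincar\'e inequality at every time. Both then conclude by a backward Gr\"onwall argument. Your justification of the energy identity via strong solutions is extra detail the paper omits, and your exponent bookkeeping is cleaner: the paper writes $e^{-(c_P^2+c_f)t}$ where its differential inequality yields $e^{-2(c_P^2+c_f)t}$, which is harmless since $c$ is unspecified.
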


\begin{Proof}{Theorem \ref{TheorStability}}
Let $w(t)\equiv u_\theta(t)-u_\vartheta(t)$ for all $t\ge 0$,
and observe that $w$ satisfies the time-evolution equation
\begin{equation}\label{pseudlin}
\frac{dw}{dt} - \Delta w 
= 
f(u_\theta(t)) - f(u_\vartheta(t))
\equiv 
g(t)
,\spa 
w(0) = u_\theta(0) - u_\vartheta(0) 
.
\end{equation}
Taking the $L^2(\Om)$-inner product of (\ref{pseudlin}) with $w$ yields
$$
\frac{1}{2} \frac{d}{dt}\|w(t)\|_{L^2}^2 +  \|\nabla w(t)\|_{L^2}^2 = \ps{g(t)}{w(t)}_{L^2}
.
$$
Recall from Proposition \ref{PropAttractor} that the global attractor $\AA$  is bounded in $H^2(\Om)$; the same holds for the inertial manifold $\MM$ (see after Theorem \ref{TheorManifold}). By the Sobolev embedding $H^2(\Om)\embed L^\infty(\Om)$, let $R>0$ be a uniform $L^\infty$-bound of $\MM$ hence also $\AA$, by virtue of the inclusion $\AA \subset \MM$ (see after Theorem \ref{TheorManifold}). Also recall from (\ref{invA}) that the attractor $\AA$ is invariant under the dynamics so that $u_\theta(t)\in \AA$ and $u_\vartheta(t)\in \AA$ for all $t\ge 0$ whenever $\theta,\vartheta\in\AA$; similarly, when $\theta, \vartheta\in\MM$ we have $u_\theta(t),u_\vartheta(t)\in\MM$ for all $t\ge 0$ by invariance of the inertial manifold (see (\ref{invM})). In particular, we have 
$$
\|u_\theta(t)\|_{L^\infty}
\le 
R,\spa 
\text{and}
\spa 
\|u_\vartheta(t)\|_{L^\infty}\le R 
,\spa 
\forall\ t\ge 0 
.
$$
Since $f$ is continuously differentiable over $\R$, it is Lipschitz on the interval $[-R, R]$ to the effect that 
$$
|g(t)| 
\le
\|f'\|_{L^\infty([-R, R])} |u_\theta(t) - u_\vartheta(t)| 
\equiv 
c_f |w(t)|
.
$$
Consequently, we have 
$$
\frac{1}{2} \frac{d}{dt}\|w(t)\|_{L^2}^2  
\ge 
- \|\nabla w(t)\|_{L^2}^2 - c_f \|w(t)\|_{L^2}^2
.
$$
The invariance property of $\AA$ and $\MM$ under the dynamics also implies that the reverse Poincaré inequality in Proposition \ref{PropReversePoin} applies, so that
$$
\|\nabla(u_\theta(t)-u_\vartheta(t))\|_{L^2}
\le 
c_P\|u_\theta(t)-u_\vartheta(t)\|_{L^2}
,\spa 
\forall\ t\ge 0
,
$$
for some universal constant $c_P>0$ independent of $\theta$ and $\vartheta$. This entails that
$$
\frac{1}{2} \frac{d}{dt}\|w(t)\|_{L^2}^2 
\ge 
-c_P^2\|w(t)\|^2_{L^2} - c_f \|w(t)\|_{L^2}^2
=
-(c_P^2+c_f)\|w(t)\|_{L^2}^2
,\spa 
\forall\ t\ge 0 
.
$$
Gr\"onwall's inequality then implies that
\begin{equation}\label{keybd}
\|w(t)\|_{L^2}^2 
\ge 
\|w(0)\|_{L^2}^2 e^{-(c_P^2+c_f)t}
,\spa 
\forall\  t\ge 0,
\end{equation}
which yields the conclusion.
\end{Proof}

Since the posterior measure in Theorems \ref{TheorContractNet} and \ref{TheorContractNet2} is supported on $\mathcal A$ or $\mathcal M$, respectively, the previous theorem implies:

\begin{Theor}\label{TheorContractTheta}
Let $\theta_0 \in \AA$, let $\delta_N \equiv \sqrt{\log(N)/N}$ and let $\Pi(\cdot|Z^{(N)})$ be the posterior measure arising from data $Z^{(N)}=(X_i,Y_i)_{i=1}^N$ as in (\ref{model}) with $t>0$ and prior $\Pi$ given either by (\ref{PriorNet}) or (\ref{Prior2}). Then there exists $M>0$ large enough such that
$$
\Pi\Big( \theta : \|\theta-\theta_0\|_{L^2} \le M \delta_N~ |~ Z^{(N)} \Big)
\to 
1
,
$$
in $P_{\theta_0}^\N$-probability as $N\to \infty$. 
\end{Theor}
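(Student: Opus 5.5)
The plan is to deduce the statement from the forward contraction results, Theorems \ref{TheorContractNet} and \ref{TheorContractNet2}, combined with the stability estimate of Theorem \ref{TheorStability}. The key observation is that the posterior $\Pi(\cdot|Z^{(N)})$ is supported on $\AA$ for the prior (\ref{PriorNet}), and on $\MM$ for the prior (\ref{Prior2}). Moreover, $\theta_0\in\AA\subset\MM$. Hence, for every $\theta$ in the support of the posterior, the pair $(\theta,\theta_0)$ lies in the set where the backward Lipschitz estimate is available. For the manifold prior I will use the remark after Theorem \ref{TheorStability} that its proof only uses invariance of the underlying set and the reverse Poincar\'e inequality. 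Both properties hold on $\MM$, by (\ref{invM}) and the comment following Proposition \ref{PropReversePoin}.

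Concretely, fix $t>0$ and let $M_0$ be the constant provided by Theorem \ref{TheorContractNet} (respectively Theorem \ref{TheorContractNet2}). Set $M\equiv e^{ct}M_0$, where $c=c(f)$ is the constant of Theorem \ref{TheorStability}. For $\theta$ in $\AA$ (respectively $\MM$), Theorem \ref{TheorStability} gives
$$
\|\theta-\theta_0\|_{L^2}\le e^{ct}\|u_\theta(t)-u_{\theta_0}(t)\|_{L^2}.
$$
This yields the set inclusion
$$
\{\theta\in\AA:\|u_\theta(t)-u_{\theta_0}(t)\|_{L^2}\le M_0\delta_N\}\subseteq\{\theta:\|\theta-\theta_0\|_{L^2}\le M\delta_N\},
$$
and the analogous inclusion with $\MM$ in place of $\AA$. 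By monotonicity of the posterior probability,
$$
\Pi\big(\theta:\|\theta-\theta_0\|_{L^2}\le M\delta_N\,|\,Z^{(N)}\big)\ge\Pi\big(\theta\in\AA:\|u_\theta(t)-u_{\theta_0}(t)\|_{L^2}\le M_0\delta_N\,|\,Z^{(N)}\big),
$$
using $\Pi(\AA|Z^{(N)})=1$ (or $\Pi(\MM|Z^{(N)})=1$). The right-hand side tends to $1$ in $P^\N_{\theta_0}$-probability by the forward contraction theorems, which concludes the argument.

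The only step requiring care is the extension of Theorem \ref{TheorStability} to $\theta\in\MM$ in the second prior case. Specifically, one must check that $u_\theta(s)\in\MM$ for all $s\in[0,t]$ and that the uniform $L^\infty$-bound $R$ and the reverse Poincar\'e constant hold along these trajectories. I will take these facts from the invariance (\ref{invM}) and the $H^2$-boundedness of $\MM$ stated after Theorem \ref{TheorManifold}. Note also that $M$ depends on $t$ only through $e^{ct}$, which is a fixed constant since $t$ is fixed.
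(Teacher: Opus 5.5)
Your proposal is correct and follows the paper's own argument. The paper deduces Theorem \ref{TheorContractTheta} in one line from Theorems \ref{TheorContractNet} and \ref{TheorContractNet2} together with Theorem \ref{TheorStability} (including its stated extension to $\MM$), using that the posterior is supported on $\AA$, respectively $\MM$; your choice $M=e^{ct}M_0$ and the resulting set inclusion make that deduction explicit.
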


\begin{Corol}\label{CorolPointEstimate}
    Let $\theta_0\in \AA$ and $\delta_N = \sqrt{\log(N)/N}$. There exists a sequence of $\AA$-valued estimators $\hat \theta_N$ such that $\|\hat \theta_N-\theta_0\|_{L^2} = O_{P_{\theta_0}^\N}(\delta_N)$ as $N\to \infty$.
\end{Corol}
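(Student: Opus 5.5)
We will construct $\hat\theta_N$ from the posterior of Theorem \ref{TheorContractTheta} for the $\ve$-net prior (\ref{PriorNet}). This posterior is supported on $\AA$, so we can pick an estimator inside $\AA$. We use the standard ``center of a posterior ball'' construction (cf.\ Chapter 1 of \cite{NicklEMS}, or Theorem 8.7 in \cite{GV17}). Let $M$ be as in Theorem \ref{TheorContractTheta}. For each data realisation, consider the set of points $a\in\AA$ satisfying
$$
\Pi\big(\theta:\|\theta-a\|_{L^2}\le M\delta_N~|~Z^{(N)}\big) > 1/2 .
$$
If this set is nonempty, let $\hat\theta_N$ be any point of it. Since the posterior is a discrete measure on countably many atoms of $\AA$, we can take the point from the atoms with a fixed enumeration, which ensures measurability. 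Otherwise, let $\hat\theta_N$ be a fixed element of $\AA$. By construction $\hat\theta_N$ is $\AA$-valued.

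Next we work on the event $E_N=\{\Pi(\theta:\|\theta-\theta_0\|_{L^2}\le M\delta_N|Z^{(N)})>1/2\}$. By Theorem \ref{TheorContractTheta}, $P_{\theta_0}^\N(E_N)\to1$. On $E_N$ the point $\theta_0\in\AA$ belongs to the set above, but it may not be an atom. Restricting to atoms therefore needs a small adjustment: we enlarge the radius to $2M\delta_N$ in the definition. The posterior mass of the $M\delta_N$-ball around $\theta_0$ exceeds $1/2$, so that ball contains some atom $a^*$. The ball of radius $2M\delta_N$ around $a^*$ contains the $M\delta_N$-ball around $\theta_0$, so $a^*$ qualifies and the selection set is nonempty on $E_N$.

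It remains to bound the distance. The chosen $\hat\theta_N$ and $\theta_0$ each carry a ball of posterior mass $>1/2$: radius $2M\delta_N$ around $\hat\theta_N$ and radius $M\delta_N$ around $\theta_0$. The two balls must therefore intersect, and the triangle inequality gives $\|\hat\theta_N-\theta_0\|_{L^2}\le 3M\delta_N$ on $E_N$. Hence $\|\hat\theta_N-\theta_0\|_{L^2}=O_{P_{\theta_0}^\N}(\delta_N)$.

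The only delicate point is making the estimator measurable while keeping it $\AA$-valued. The discreteness of the $\ve$-net prior handles this, since we select among countably many atoms in a fixed order. If one wanted to use the prior (\ref{Prior2}) instead, the estimator would be $\MM$-valued, and one would then need to project onto the compact set $\AA$. We therefore prefer the $\ve$-net prior.
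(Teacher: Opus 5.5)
Your proof is correct and is essentially the paper's argument: the posterior-ball centring of \cite{GV17}, together with the fact that two balls each carrying posterior mass above $1/2$ must intersect. Your fixed radius $2M\delta_N$ and selection among the countably many atoms of the $\ve$-net posterior also settle measurability, which the paper passes over, and give the clean bound $3M\delta_N$ on an event of probability tending to one. The one thing to state explicitly is that your $\hat\theta_N$ depends on $M$, which is legitimate only because, for the prior (\ref{PriorNet}), the small-ball, entropy and sup-norm constants in the proof of Theorem \ref{TheorContractNet} and the stability constant of Theorem \ref{TheorStability} are uniform over $\theta_0\in\AA$; the paper's near-minimal-radius choice of $\hat\theta_N$ does not need $M$ at all.
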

Theorem \ref{mainintro} now follows from this corollary and Proposition \ref{PropSolExist} below.

\begin{Proof}{Corollary \ref{CorolPointEstimate}}
    We follow the argument in Proposition 6.7 of \cite{GV17} and assume $M=1$ in Theorem \ref{TheorContractTheta} (otherwise one re-defines $\delta_N$ appropriately). Let $\Pi(\cdot|Z^{(N)})$ be the posterior measure arising from data $Z^{(N)}=(X_i,Y_i)_{i=1}^N$ as in (\ref{model}) and $\ve$-net prior $\Pi$ given by (\ref{Prior2}). Denote by $B(\theta, r)$ the closed ball in the metric space $(\AA, \|\cdot\|_{L^2})$ centred at $\theta\in \AA$ with radius $r>0$. For any $\theta\in \AA$, let 
    $$
    r_N(\theta)
    =
    \inf\Big\{ r > 0 : \Pi(B(\theta, r) | Z^{(N)}) \ge \frac12 \Big\} 
    .
    $$
    Theorem \ref{TheorContractTheta} entails that $\Pi(B(\theta_0, \delta_N)|Z^{(N)})\to 1$ in $P_{\theta_0}^\N$-probability, so that
    $$
    r_N(\theta_0)\le \delta_N + o_{P_{\theta_0}^\N}(1)
    .
    $$
    Among all the $\theta$'s in $\AA$ for which $r_N(\theta)<\infty$, pick $\hat \theta_N\in\AA$ such that the corresponding ball has minimal radius up to a $\delta_N$-error. In particular, we have
    $$
    r_N(\hat \theta_N)
    \le 
    r_N(\theta_0)+\delta_N
    \le 
    2\delta_N + o_{P_{\theta_0}^\N}(1) 
    .
    $$
    Now, with $P_{\theta_0}^\N$-probability approaching $1$ then $B(\theta_0, r_N(\theta_0))$ and $B(\hat \theta_N, r_N(\hat \theta_N))$ cannot be disjoint, otherwise the posterior mass of their union would be approaching $1+1/2(>1)$ in $P_{\theta_0}^\N$-probability as $N\to\infty$. In particular, this provides 
    $$
    \|\hat \theta_N - \theta_0\|_{L^2}
    \le 
    r_N(\theta_0) + r_N(\hat \theta_N)
    + o_{P_{\theta_0}^\N}(1)
    \le 
    3\delta_N + o_{P_{\theta_0}^\N}(1)
    ,
    $$
    which concludes the proof.
\end{Proof}

\section{Background material and proofs of Theorem \ref{backstab} and Proposition \ref{PropReversePoin}}

\subsection{Reaction-diffusion equations, absorbing sets, and the global attractor}

Throughout this section, we will assume that $f:\R\to \R$ is a function in $C^3(\R)$ satisfying (\ref{Condf}) for some $p>2$. The following result follows from Theorem 1.1 in \cite{Marion1987}; see also Sections 8.3-8.4 in \cite{Robinson2001}. When $\theta\in L^2(\Om)$, we say that the solution is weak when (\ref{eq:ReacDiff}) holds as an equality in $L^q([0,T], H^{-\kappa}(\Om))$ for any $\kappa > (p-2)/p$, where $q$ is conjugate to $p$.

\begin{Prop}\label{PropSolExist}
For all $\theta\in L^2(\Om)$, the system of equations~(\ref{eq:ReacDiff}) admits a unique weak solution $u_{\theta}\in C^0([0,\infty), L^2(\Om))$ such that $u_{\theta}\in L^2([0,T], H^1(\Om))\cap L^p([0,T]\times \Om)$ for all $T>0$. In addition, there exists a constant $c=c(f)>0$ such that 
$$
\|u_{\theta}(t)-u_{\vartheta}(t)\|_{L^2}
\le 
e^{ct} \|\theta-\vartheta\|_{L^2}
,\spa 
\forall\ t\ge 0,\ \theta,\vartheta\in L^2(\Om) 
.
$$
If $\theta\in H^1(\Om)\cap L^p(\Om)$, then $u_{\theta}$ is a strong solution and we have $u_{\theta}\in C^0([0,\infty), H^1(\Om))$ with $u_{\theta}\in L^2([0,T], H^2(\Om))\cap L^\infty([0,T], L^p(\Om))$ for all $T>0$.
\end{Prop}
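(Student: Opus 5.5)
The plan is to take existence and uniqueness from the Galerkin/monotonicity theory, and to prove the Lipschitz estimate directly by an energy argument on the difference of two solutions. That estimate is the part actually used above.

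\textbf{Existence and uniqueness.} Approximate (\ref{eq:ReacDiff}) by its Galerkin truncation $u^m=P_m[u^m]$, which solves $\partial_t u^m-\Delta u^m=P_m[f(u^m)]$, $u^m(0)=P_m\theta$. Testing with $u^m$ and using $f(s)s\le k-\alpha_2|s|^p$ gives
$$
\tfrac12\tfrac{d}{dt}\|u^m\|_{L^2}^2+\|\nabla u^m\|_{L^2}^2+\alpha_2\|u^m\|_{L^p}^p\le k|\Om| ,
$$
which yields uniform bounds in $L^\infty_tL^2\cap L^2_tH^1\cap L^p_{t,x}$. From $|f(s)|\lesssim 1+|s|^{p-1}$ (Remark \ref{dimitridoesallthework}) we get $f(u^m)$ bounded in $L^q_{t,x}$. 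The time derivative is then bounded in $L^q([0,T],H^{-\kappa})$, using the embedding $H^\kappa\embed L^p$ in two dimensions, valid for $\kappa>(p-2)/p$. Aubin--Lions compactness then lets us pass to the limit in the nonlinearity, and $u\in C([0,T],L^2)$ follows from the Lions--Magenes lemma for the pair $L^2_tH^1$, $L^q_tH^{-\kappa}$ with the $L^p$ duality. For the strong-solution statement, test with $-\Delta u^m$. Since $\langle f(u),-\Delta u\rangle=-\int f'(u)|\nabla u|^2\ge -L\|\nabla u\|_{L^2}^2$, this gives $H^1$ and $L^2_tH^2$ bounds. Testing with $|u|^{p-2}u$ gives the $L^\infty_tL^p$ bound.

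\textbf{Lipschitz estimate.} Set $w=u_\theta-u_\vartheta$, so that $\partial_tw-\Delta w=f(u_\theta)-f(u_\vartheta)$. By the mean value theorem and $f'\le L$,
$$
\big(f(u_\theta)-f(u_\vartheta)\big)w=f'(\xi)w^2\le Lw^2
$$
pointwise. Pairing the equation with $w$ therefore gives
$$
\tfrac12\tfrac{d}{dt}\|w\|_{L^2}^2+\|\nabla w\|_{L^2}^2\le L\|w\|_{L^2}^2 .
$$
Dropping the gradient term and applying Gr\"onwall yields $\|w(t)\|_{L^2}\le e^{L^+t}\|w(0)\|_{L^2}$, so $c=\max(L,0)$ works; we may take $c=\max(L,0)+1$ to ensure $c>0$. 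Uniqueness is the special case $\theta=\vartheta$ of this estimate.

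\textbf{Main obstacle.} The difficulty is justifying the energy identity for weak solutions with data only in $L^2$. The pairing $\langle \partial_tw,w\rangle$ must make sense, and it does, because $w\in L^2_tH^1\cap L^p$ and $\partial_t w\in L^2_tH^{-1}+L^q_{t,x}$, which is the dual space. It is important that $f(u_\theta)-f(u_\vartheta)\in L^q$ is paired with $w\in L^p$, both being integrable. With that, the Lions--Magenes chain rule applies. Alternatively, one can prove the estimate for smooth data, where solutions are strong, and extend it by density of $H^1\cap L^p$ in $L^2$, using continuity of the limit map. I would choose the density route, since it avoids the technical duality argument.
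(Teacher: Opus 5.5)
Your outline is correct in its main line and is essentially the argument the paper relies on. The paper gives no proof of its own; it cites Theorem~1.1 of Marion and Sections~8.3--8.4 of Robinson. There, existence comes from Galerkin approximation with these energy bounds plus compactness. Uniqueness and the $e^{ct}$ bound come from pairing the difference equation with $w$ and using $f'\le L$. The pairing is justified by a Lions--Magenes lemma for $w\in L^2_tH^1\cap L^p_{t,x}$ with $\partial_t w\in L^2_tH^{-1}+L^q_{t,x}$, which is your first justification.

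However, the density route you say you would prefer does not avoid that duality argument. Proving the estimate for strong solutions and extending by density only shows that limits of strong solutions form a Lipschitz family of weak solutions. To identify this extension with $\theta\mapsto u_\theta(t)$, and to get the uniqueness asserted in the statement, you must compare two \emph{arbitrary} weak solutions in the class $C([0,T],L^2)\cap L^2_tH^1\cap L^p_{t,x}$ with the same data. Nothing tells you such solutions are limits of strong ones. So you still need $\frac{d}{dt}\|w\|_{L^2}^2=2\langle\partial_t w,w\rangle$ for weak $w$, which is exactly the Lions--Magenes step. In particular, ``uniqueness is the special case $\theta=\vartheta$'' holds only on the direct route, so keep that route.

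Two smaller slips. First, the sign: $\langle f(u),-\Delta u\rangle_{L^2}=+\int f'(u)|\nabla u|^2\le L\|\nabla u\|_{L^2}^2$, and this upper bound is what the $H^1$ estimate uses. Second, $|u^m|^{p-2}u^m$ does not lie in the Galerkin space, so it is not an admissible test function there: the term $\langle P_m f(u^m),\cdot\rangle$ does not reduce to $\int f(u^m)|u^m|^{p-2}u^m$. In two dimensions, though, the $L^\infty_tL^p$ bound follows anyway from the $L^\infty_tH^1$ bound and $H^1\embed L^p$.
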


 We say that the solution is strong whenever~(\ref{eq:ReacDiff}) holds in $L^2([0,T]\times \Om)$.\smallskip

 We next show that the dynamical system $S(t)$ has an absorbing set in $H^1$.

\begin{Prop}\label{PropAbsorbH1}
    There exists a constant $C=C(f)>0$ such that for all $\theta\in L^2(\Om)$ we have $\|u_\theta(t)\|_{H^1}\le C$ for all $t\ge t_0$, for some $t_0=t_0(\|\theta\|_{L^2})$.
\end{Prop}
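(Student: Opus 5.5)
This is the classical two-step absorbing-ball argument for reaction--diffusion equations, as in Robinson (2001), Sections 8--11. We first obtain an absorbing ball in $L^2$ using the dissipativity in (\ref{Condf}). We then upgrade it to $H^1$ via the uniform Gronwall lemma, with the integrability in time of $\|\nabla u\|^2_{L^2}$ supplied by the $L^2$ energy estimate.

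We work with smooth, or at least strong, solutions and then argue by density and the $L^2$-Lipschitz continuity of Proposition \ref{PropSolExist}. Since $u_\theta(s)\in H^1\cap L^p$ for almost every $s>0$, after an arbitrarily short initial time we may assume the solution is strong.

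\textbf{Step 1 ($L^2$ absorbing ball).} Test (\ref{eq:ReacDiff}) with $u$ and use $f(s)s\le k-\alpha_2|s|^p$, together with $|\Om|=1$:
$$\frac12\frac{d}{dt}\|u\|_{L^2}^2+\|\nabla u\|_{L^2}^2+\alpha_2\|u\|_{L^p}^p\le k .$$
Since $p>2$, Hölder and Young give $\|u\|_{L^2}^2\le \|u\|_{L^p}^2\le \frac{\alpha_2}{2}\|u\|_{L^p}^p + C$. Hence
$$\frac{d}{dt}\|u\|_{L^2}^2+\|u\|_{L^2}^2\le C_1 ,$$
and Gronwall yields $\|u(t)\|_{L^2}^2\le e^{-t}\|\theta\|_{L^2}^2+C_1$. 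So $\|u(t)\|_{L^2}^2\le 2C_1$ for all $t\ge t_1(\|\theta\|_{L^2})$. Integrating the first inequality over $[t,t+1]$ for $t\ge t_1$ also gives
$$\int_t^{t+1}\Big(\|\nabla u\|_{L^2}^2+\alpha_2\|u\|_{L^p}^p\Big)\,ds\le k+C_1\equiv C_2 .$$

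\textbf{Step 2 (differential inequality for the gradient).} Test with $-\Delta u$, which is legitimate for strong solutions:
$$\frac12\frac{d}{dt}\|\nabla u\|_{L^2}^2+\|\Delta u\|_{L^2}^2=-\int f'(u)|\nabla u|^2\,dx\le L\|\nabla u\|_{L^2}^2 .$$
Here we used $\int f(u)(-\Delta u)\,dx=\int f'(u)|\nabla u|^2\,dx$ on the torus, which has no boundary terms, and the one-sided bound $f'\le L$. This is the key point: only the upper bound on $f'$ is needed, so the superlinear growth of $f$ causes no trouble. Dropping the $\Delta$ term gives $y'\le 2L_+\,y$ with $y=\|\nabla u\|_{L^2}^2$ and $L_+=\max(L,0)$.

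\textbf{Step 3 (uniform Gronwall).} For $t\ge t_1$, pick $s\in[t,t+1]$. Integrating $y'\le 2L_+y$ from $s$ to $t+1$ gives $y(t+1)\le e^{2L_+}y(s)$. Averaging over $s\in[t,t+1]$ and using Step 1,
$$\|\nabla u(t+1)\|_{L^2}^2\le e^{2L_+}C_2 .$$
Combined with the $L^2$ bound, this gives $\|u(\tau)\|_{H^1}\le C(f)$ for all $\tau\ge t_0\equiv t_1+1$, where $t_0$ depends only on $\|\theta\|_{L^2}$.

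The main obstacle is rigor rather than estimates. The identity in Step 2 requires $u\in L^2([0,T],H^2)$ and the chain rule for $f(u)$, which hold for strong solutions by Proposition \ref{PropSolExist} when $\theta\in H^1\cap L^p$. For general $\theta\in L^2$ we need a further argument. One option is to use that $u(s)\in H^1\cap L^p$ for almost every small $s$, restart the dynamics from such an $s$, and note that the constants do not depend on $s$. Alternatively, approximate $\theta$ in $L^2$ by smooth $\theta_k$, apply the bound to $u_{\theta_k}$ with uniform $t_0$, and pass to the limit using the $L^2$-Lipschitz estimate and weak lower semicontinuity of the $H^1$ norm.
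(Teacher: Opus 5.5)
Your proof is correct and is essentially the paper's argument. The paper likewise replaces the Poincar\'e inequality by H\"older and Young with $|\Om|=1$ to obtain the $L^2$ dissipative inequality. It then defers the $H^1$ step to Robinson's Propositions 11.1 and 11.3, i.e.\ testing with $-\Delta u$, using $f'\le L$ and the integral bound on $\|\nabla u\|_{L^2}^2$ in a uniform Gronwall argument, which you carry out explicitly.

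There is a sign slip in your Step 2 display: the right-hand side should be $+\int f'(u)|\nabla u|^2\,dx$, as your next sentence says, and that is why the one-sided bound $f'\le L$ is the one you need.
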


\begin{Proof}{Proposition \ref{PropAbsorbH1}}
    The proof follows the same lines as that of Proposition 11.1 and 11.3 in \cite{Robinson2001} once we establish that for some constants $b,c>0$, 
    \begin{equation}\label{eq:PDERob}
    \frac{d}{dt}\|u_\theta\|^2_{L^2} + 2c\|u_\theta\|^2_{L^2}
    \le 
    2b
    ,\spa 
    \forall\ t>0.
    \end{equation}
 To show (\ref{eq:PDERob}) in our periodic setting, we cannot rely on the standard Poincar\'e inequality as in \cite{Robinson2001}. Instead, let us use the growth condition (\ref{Condf}) on $f$ and that $|\Omega|=1$: As in (11.6) in \cite{Robinson2001} we have
    $$
    \frac12 \frac{d}{dt}\|u_\theta\|^2_{L^2} + \|\nabla u_\theta\|^2_{L^2} + \alpha_2 \|u_\theta\|^p_{L^p}
    \le 
    k
    ,\spa 
    \forall\ t\ge 0 
    .
    $$
    Because $p\ge 2$, H\"older's and Young's inequalities provide 
    $$
    \|u_\theta\|^2_{L^2}
    \le  
    \|u_\theta\|^2_{L^p}
    \le 
    \frac2p \|u_\theta\|^p_{L^p} + \frac{p-2}{p}
    ,
    $$
   using again that $|\Om|=1$. Combining the preceding displays we deduce that 
    $$
    \frac12 \frac{d}{dt}\|u_\theta\|^2_{L^2} 
    + 
    \|\nabla u_\theta\|^2_{L^2} 
    + 
    \frac{2\alpha_2}{p} \|u_\theta\|^2_{L^2} - \frac{\alpha_2(p-2)}{2} 
    \le 
    k
    ,\spa 
    \forall\ t\ge 0 
    .
    $$
    Since $\alpha_2>0$, then dropping the term $\|\nabla u_\theta\|_{L^2}$ in the last display yields (\ref{eq:PDERob}) hence concludes the proof.
\end{Proof}

From the previous proposition we can deduce:

\begin{Prop}\label{PropAttractor}
Let $S(t)=u_\theta(t)$ be the solution operator for (\ref{eq:ReacDiff}) for $f$ satisfying (\ref{Condf}). Then, the semidynamical system $(L^2(\Omega),\{S(t)\}_{t \geq 0})$ admits a global attractor $\AA$ as defined in Definition 10.4 in \cite{Robinson2001}; moreover $\mathcal A$ is bounded in $H^2(\Omega)$, and given by formula (\ref{lazydimitri}). 
\end{Prop}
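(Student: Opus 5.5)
The plan is to follow the standard route of Robinson's book (Theorem 10.5 together with Chapters 11 and 13): first produce a compact absorbing set in $L^2(\Om)$, then obtain $\AA$ via formula (\ref{lazydimitri}), and finally show that $\AA$ is bounded in $H^2(\Om)$ by establishing an $H^2$-absorbing ball.

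\emph{Step 1: compact absorbing set.} Proposition \ref{PropAbsorbH1} gives a ball $B_1=\{\|u\|_{H^1}\le C\}$ that absorbs every bounded $X\subset L^2(\Om)$ after a time $t_0(X)$. On the torus the embedding $H^1(\Om)\embed L^2(\Om)$ is compact (Rellich), so $B_1$ is compact in $L^2$. Continuity of each $S(t)$ on $L^2$ follows from the Lipschitz bound in Proposition \ref{PropSolExist}. Robinson's abstract Theorem 10.5 then applies: a continuous semigroup with a compact absorbing set has a global attractor, namely $\AA=\bigcap_{t>0}S(t)B$ for any compact absorbing $B$. This gives existence of $\AA$, its compactness and invariance, and formula (\ref{lazydimitri}).

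\emph{Step 2: $H^2$ bound.} Here I would mimic Robinson Proposition 11.3 and Theorem 11.8 (the uniform Gronwall lemma). Differentiate the equation in time and set $v=\partial_t u$, which satisfies $v_t-\Delta v=f'(u)v$. Pairing with $v$ and using $f'\le L$ gives
$$\tfrac12\tfrac{d}{dt}\|v\|_{L^2}^2+\|\nabla v\|_{L^2}^2\le L\|v\|_{L^2}^2.$$
To feed the uniform Gronwall lemma I need a bound on $\int_t^{t+1}\|v\|_{L^2}^2\,ds$ on the absorbing set. This comes from pairing the equation with $u_t$:
$$\|u_t\|^2+\tfrac{d}{dt}\Big(\tfrac12\|\nabla u\|^2-\int_\Om \mathcal F(u)\Big)=0,\qquad \mathcal F'=f.$$
The growth condition (\ref{Condf}) makes $-\mathcal F(u)$ comparable to $|u|^p$ up to constants. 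Integrating over $[t,t+1]$, and using the $H^1$ bound together with the $L^p$ time-integral bound from the energy inequality in the proof of Proposition \ref{PropAbsorbH1}, controls $\int_t^{t+1}\|u_t\|^2$. The one place where that $L^p$ bound enters is $\int_\Om|u|^p$, which is controlled via $H^1\embed L^p$ in two dimensions. The uniform Gronwall lemma then gives $\|u_t(t)\|_{L^2}\le C'$ for $t\ge t_1$.

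\emph{Step 3: elliptic regularity.} Write $-\Delta u=f(u)-u_t$. I need $\|f(u)\|_{L^2}$ bounded. Since $|f(s)|\lesssim 1+|s|^{p-1}$ by Remark \ref{dimitridoesallthework}, this reduces to bounding $\|u\|_{L^{2(p-1)}}^{p-1}$, which again follows from the $H^1$ bound and the 2D Sobolev embedding $H^1\embed L^q$ for all $q<\infty$. On the torus, $\|u\|_{H^2}\lesssim\|\Delta u\|_{L^2}+\|u\|_{L^2}$, so there is an $H^2$-absorbing ball $B_2$. Because $\AA=S(t)\AA$ is bounded and eventually contained in $B_2$, it lies in $B_2$.

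The main obstacle is Step 2: justifying the time differentiation, which requires working with Galerkin approximations or with strong solutions started from $H^1\cap L^p$ data (Proposition \ref{PropSolExist}), and proving the integral bound on $\|u_t\|^2$ uniformly on the absorbing set. I would handle it exactly as in Robinson Theorem 11.8, checking that the only place where the Dirichlet Poincaré inequality is used there can be replaced by the $|\Om|=1$ and Hölder trick from the proof of Proposition \ref{PropAbsorbH1}.
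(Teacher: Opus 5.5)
Your proposal is correct and follows the paper's route. The paper likewise gets existence of $\AA$ and formula (\ref{lazydimitri}) from Theorem 10.5 and eq.~(10.10) of \cite{Robinson2001}, applied to the $L^2$-compact $H^1$-absorbing ball of Proposition \ref{PropAbsorbH1} with continuity of $S(t)$ from Proposition \ref{PropSolExist}, and it defers the $H^2$ bound to the proof of Theorem 11.7 in \cite{Robinson2001}; your Steps 2--3 spell that argument out (time-differentiated equation with $f'\le L$, uniform Gronwall fed by $\int_t^{t+1}\|u_t\|_{L^2}^2$, then elliptic regularity for $-\Delta u=f(u)-u_t$), using the two-dimensional embedding $H^1\embed L^q$ to control $f(u)$ in $L^2$ and $\int_\Om \mathcal F(u)$, so no $L^\infty$ estimate is needed.
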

\begin{proof}
That a global attractor in the sense of Definition 10.4 in \cite{Robinson2001} exists follows from Theorem 10.5 in \cite{Robinson2001}, and that it coincides with our definition follows from eq.~(10.10) in the same reference. The proof of $H^2$-boundedness follows as in the proof of Theorem 11.7 in \cite{Robinson2001}.
\end{proof}


It follows in particular that $\AA$ is invariant under the dynamics;
\begin{equation}\label{invA}
    u_\theta(t)\in \AA
    ,\spa 
    \forall\ t\ge 0,\ \forall\ \theta\in\AA,
\end{equation}
and independent of the choices of the absorbing set.

\subsection{The inertial manifold}\label{sec:Manifold}

While the global attractor $\mathcal A$ describes the precise limiting states of our reaction diffusion equation, it has a complicated analytical structure. Instead we shall work with a slightly larger manifold $\mathcal M$ that contains $\mathcal A$, that shares many properties with $\mathcal A$ but is analytically more tractable (once its existence is shown). 

To construct $\mathcal M$ for our PDE we will follow arguments developed in \cite{Foias1988}, and this requires some auxiliary results which we provide now.

First we will need to upgrade Proposition \ref{PropAbsorbH1} to the effect that the semidynamical system $(H^2(\Om), \{ S(t) : t\ge 0\})$ admits a bounded $H^2$-absorbing set -- note that this is a stronger requirement than the boundedness of $\AA$ in $H^2(\Om)$. 

\begin{Theor}\label{TheorAbsorbH2}
    There exists a constant $C=C(f)>0$ such that for all $\theta\in H^1(\Om)$ we have $\|u_\theta(t)\|_{H^2}\le C$ for all $t\ge \tilde t$, for some $\tilde t=\tilde t(\|\theta\|_{H^1})$.
\end{Theor}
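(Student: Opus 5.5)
We will prove the $H^2$-absorbing property by the standard energy method for the time derivative $v=\partial_t u_\theta$, in the spirit of the proof of Theorem 11.7 in \cite{Robinson2001}, combined with elliptic regularity on the torus. Since $-\Delta u = f(u) - \partial_t u$ and $\|u\|_{H^2}\lesssim \|u\|_{L^2}+\|\Delta u\|_{L^2}$ on $\Om$, it suffices to bound $\|\partial_t u(t)\|_{L^2}$ and $\|f(u(t))\|_{L^2}$ uniformly for $t\ge\tilde t$.

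\textbf{Step 1: uniform $H^1$ bounds and integrated dissipation.} By Proposition \ref{PropAbsorbH1}, for $\theta\in H^1(\Om)$ we have $\|u_\theta(t)\|_{H^1}\le C$ for $t\ge t_0(\|\theta\|_{L^2})$, and $\|\theta\|_{L^2}\le\|\theta\|_{H^1}$. In dimension two, $H^1(\Om)\embed L^q(\Om)$ for every $q<\infty$. Together with the growth bound $|f(s)|\lesssim 1+|s|^{p-1}$ from Remark \ref{dimitridoesallthework}, this gives $\|f(u(t))\|_{L^2}\le C'$ for $t\ge t_0$. Next we test the equation with $\partial_t u$, which is legitimate for strong solutions by Proposition \ref{PropSolExist}. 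Writing $\mathcal F(s)=\int_0^s f$, this gives
$$\|\partial_t u\|_{L^2}^2+\frac{d}{dt}\Big(\tfrac12\|\nabla u\|_{L^2}^2-\int_\Om \mathcal F(u)\Big)=0 .$$
Condition (\ref{Condf}) implies $-\mathcal F(s)\ge c|s|^p - C''$, so the bracket is bounded below. On $[t_0,t_0+1]$ it is also bounded above, by the $H^1$ bound and the embedding into $L^p$. Integrating over $[t,t+1]$ therefore yields $\int_t^{t+1}\|\partial_t u\|_{L^2}^2\,ds\le K$ for all $t\ge t_0$.

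\textbf{Step 2: differential inequality for $v=\partial_t u$.} Formally differentiating the PDE in time gives $\partial_t v-\Delta v=f'(u)v$. Testing with $v$ and using the one-sided bound $f'\le L$ from (\ref{Condf}) gives
$$\tfrac12\frac{d}{dt}\|v\|_{L^2}^2+\|\nabla v\|_{L^2}^2\le L\|v\|_{L^2}^2 .$$
This is exactly where the assumption $f'\le L$ is used. Combined with the integral bound of Step 1, the uniform Gronwall lemma (Lemma from \cite{Robinson2001}, as in Theorem 11.7 there) yields $\|v(t)\|_{L^2}^2\le (K+2LK)e^{2L}$ for all $t\ge t_0+1$. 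Returning to $-\Delta u=f(u)-v$, we obtain $\|u(t)\|_{H^2}\le C$ with $C$ depending only on $f$. We then set $\tilde t=t_0+1$, which depends only on $\|\theta\|_{H^1}$.

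\textbf{Main obstacle.} The main difficulty is justifying the computations for $v$ rigorously, since we only know $u\in L^2H^2$ from Proposition \ref{PropSolExist}. We would first carry out all estimates on Galerkin approximations $P_m u$, where everything is smooth in time. All constants above are independent of $m$, and lower semicontinuity then lets us pass to the limit. The regularity $f\in C^3$ ensures that $f'(u)$ makes sense in these approximations.

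A secondary point is making sure that no Poincaré inequality is needed on the torus. Step 2 avoids this because it only uses the $L^2$ norm of $v$. The elliptic estimate in the final step keeps the $\|u\|_{L^2}$ term, which is already controlled by Proposition \ref{PropAbsorbH1}, so the zero mode of the Laplacian causes no trouble.
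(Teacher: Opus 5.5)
Your proof is correct, but it takes a different route from the paper's.

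\textbf{The paper's route.} The paper uses only spatial energy estimates. Testing with $-\Delta u_\theta$ and using $f'\le L$ bounds $\int_t^{t+1}\|\Delta u_\theta\|_{L^2}^2\,ds$. It then invokes an $L^\infty$ absorbing bound $\|u_\theta(t)\|_{L^\infty}\le c_3$, obtained by adapting Section 11.2.1 of Robinson. Next it tests with $\Delta^2 u_\theta$ and controls $\|\nabla f(u_\theta)\|_{L^2}\le\|f'\|_{L^\infty([-c_3,c_3])}\|\nabla u_\theta\|_{L^2}$. It finishes with a uniform-Gronwall-type averaging in time.

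\textbf{Your route.} You use the Lyapunov functional to bound $\int_t^{t+1}\|\partial_t u\|_{L^2}^2\,ds$. You then run the uniform Gronwall lemma on the time-differentiated equation, and recover $\Delta u$ from the equation itself via $-\Delta u=f(u)-\partial_t u$.

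\textbf{What your route gains.} $f'$ enters only through the one-sided quantity $\int_\Om f'(u)v^2\le L\|v\|_{L^2}^2$. $f$ itself enters only through its polynomial growth and $H^1\embed L^{2(p-1)}$. So you need no $L^\infty$ bound on $u_\theta$ at all. The paper does need one, because (\ref{Condf}) gives no two-sided control on $|f'|$. As a by-product, $f\in C^1$ suffices, and your $\tilde t=t_0+1$ depends only on $\|\theta\|_{L^2}$.

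\textbf{What it costs.} You must justify the equation for $v=\partial_t u$. Your Galerkin scheme does this cleanly on the torus, because $P_m$ is the spectral projection of $-\Delta$. Testing with $v_m\in P_m[L^2(\Om)]$ therefore removes the projection: $\ps{P_m[f'(u_m)v_m]}{v_m}_{L^2}=\int_\Om f'(u_m)v_m^2\le L\|v_m\|_{L^2}^2$. The paper's test against $\Delta^2 u_\theta$ needs a comparable regularisation.

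\textbf{Two cosmetic points.}
\begin{enumerate}
\item The uniform Gronwall lemma gives $\|v(t+1)\|_{L^2}^2\le Ke^{2L^+}$ with $L^+=\max(L,0)$. Your constant $(K+2LK)e^{2L}$ is harmless when $L\ge0$, but you should write it with $L^+$.
\item The upper bound on the energy is needed for all $t\ge t_0$, not just on $[t_0,t_0+1]$. It follows from the uniform $H^1$ bound, or from the monotonicity of the energy.
\end{enumerate}
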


\begin{Proof}{Theorem \ref{TheorAbsorbH2}}
Proposition \ref{PropAbsorbH1} entails that there exists a constant $c_1>0$ independent of $\theta$ such that
\begin{equation} \label{gradest}
\|u_\theta(t)\|_{L^2}+\|\nabla u_\theta(t)\|_{L^2} \le c_1 
,\spa 
\forall\ t\ge t_0\equiv t_0(\|\theta\|_{H^1})
,
\end{equation}
where $t_0$ only depends on $\|\theta\|_{H^1}$. Taking the $L^2$-inner product of (\ref{eq:ReacDiff}) with $-\Delta u_\theta$ yields 
$$
\frac12 \frac{d}{dt}\|\nabla u_\theta\|^2_{L^2}
+
\|\Delta u_\theta\|^2_{L^2}
=
-\ps{f(u_\theta)}{\Delta u_\theta}_{L^2}
\lesssim
\|\nabla u_\theta\|^2_{L^2},
$$
arguing as on p.227f.~in \cite{Robinson2001}, using also the second part of the hypothesis (\ref{Condf}) for $f$.
Integrating the last display between $t$ and $t+1$ and using (\ref{gradest}) yields 
\begin{equation} \label{laplaceest}
\|\nabla u_\theta(t+1)\|^2_{L^2}
+
\int_{t}^{t+1} \|\Delta u_\theta(s)\|^2_{L^2}\, ds
\lesssim 
c_1 + \int_{t}^{t+1} \|\nabla u_\theta(s)\|^2_{L^2}\, ds
\le 2c_1 \equiv \ c_2
,\spa 
\forall\ t\ge t_0
\end{equation}
for some $c_2>0$ independent of $\theta$.
Adapting the arguments in Section 11.2.1 of \cite{Robinson2001} to our periodic situation one can prove that
$$
\|u_\theta(t)\|_{L^\infty}
\le 
c_3 
,\spa 
\forall\ t\ge t_1(\|\theta\|_{H^1})
,
$$
with $c_3>0$ a constant independent of $\theta$.
Now taking the inner product of (\ref{eq:ReacDiff}) with $\Delta^2 u_\theta$ yields 
\begin{align*}
\frac12 \frac{d}{dt}\|\Delta u_\theta\|^2_{L^2}  -\langle \Delta u_\theta, \Delta^2 u_\theta \rangle_{L^2} &= \langle f(u_\theta), \Delta^2 u_\theta \rangle_{L^2}
\end{align*}
so that by integration by parts, and the Cauchy-Schwarz and Young inequalities, 
$$\frac12 \frac{d}{dt}\|\Delta u_\theta\|^2_{L^2}
+
\|\nabla \Delta u_\theta\|^2_{L^2} \\
= -\ps{\nabla f(u_\theta)}{\nabla \Delta u_\theta}_{L^2} \le 
\frac{1}{2}\|\nabla (f(u_\theta))\|^2_{L^2} + \frac12 \|\nabla \Delta u_\theta\|^2_{L^2}.$$
Rearranging and dropping the $\|\nabla \Delta u_\theta\|_{L^2}$ term yields
$$
\frac{d}{dt}\|\Delta u_\theta\|^2_{L^2}
\le
\|\nabla(f(u_\theta))\|^2_{L^2}
.
$$
We now use the identity $\nabla (f(u_\theta))=f'(u_\theta) \nabla u_\theta$ established in the proof of Lemma \ref{LemgH2}, and the $L^\infty$ bound above on $u_\theta(t)$, to obtain 
$$
\|\nabla(f(u_\theta))\|_{L^2}
=
\|f'(u_\theta) \nabla u_\theta\|_{L^2}
\le
\|f'(u_\theta)\|_{L^\infty} \|\nabla u_\theta\|_{L^2}
\le 
\|f'\|_{L^\infty([-c_3, c_3])} \|\nabla u_\theta\|_{L^2}
,
$$
for all $t\ge t_1$.
This yields 
$$
\frac{d}{dt}\|\Delta u_\theta\|^2_{L^2}
\lesssim 
\|\nabla u_\theta\|^2_{L^2}
.
$$
Now integrating the last display between $s$ and $t$, $s \le t$, and using Fubini's theorem provides 
$$
\|\Delta u_\theta(t)\|^2_{L^2}
\lesssim 
\|\Delta u_\theta(s)\|^2_{L^2}
+
\int_s^t \|\nabla u_\theta(\tau)\|^2_{L^2}\, d\tau 
.
$$
Integrating in $s$ between $t-1$ and $t$ as well as using (\ref{gradest}) and (\ref{laplaceest}) yields 
$$
\|\Delta u_\theta(t)\|^2_{L^2}
\lesssim 
\int_{t-1}^t \|\Delta u_\theta(s)\|^2_{L^2}
+
\int  _{t-1}^t \|\nabla u_\theta(s)\|^2_{L^2}\,ds
\le c_1+c_2
,~~
\forall\ t\ge 1+\max\{t_0, t_1\}
.
$$
Combining this (\ref{gradest}) entails that 
$$
\|u_\theta(t)\|_{H^2}
\lesssim \|u_\theta(t)\|_{L^2} + \|\Delta u_\theta(t)\|_{L^2} \le c$$
for all $t$ large enough depending only on $\|\theta\|_{H^1}$ and some $c>0$, which yields the conclusion.
\end{Proof}

\begin{Lem}\label{LemgH2}
    Let $h\in C^2(\R)$ and $u\in H^2(\Om)$, $\Omega = [0,1]^2$. Then, $h(u)\in H^2(\Om)$, and 
    for some universal constant $c>0$, we have
    $$
    \|h(u)\|_{H^2}
    \lesssim 
    M_h(\|u\|_{H^2})
    ,
    $$
    where
    $ 
    M_h(s) 
    \equiv
    \|h\|_{L^\infty([-cs, cs])} 
    +
    \|h'\|_{L^\infty([-cs, cs])}s + \|h''\|_{L^\infty([-cs,cs])} s^2
    $
    for all $s\ge 0$.
\end{Lem}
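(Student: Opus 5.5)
The plan is to reduce everything to the Sobolev embedding $H^2(\Om)\embed L^\infty(\Om)$ on the two-dimensional torus, together with the chain rule and Gagliardo--Nirenberg. First I fix $c$ as the embedding constant, so that $\|u\|_{L^\infty}\le c\|u\|_{H^2}$ for $u\in H^2(\Om)$. With $s=\|u\|_{H^2}$, the values of $u$ then lie in $[-cs,cs]$, and $h$, $h'$, $h''$ only need to be evaluated there. This already gives $\|h(u)\|_{L^2}\le \|h(u)\|_{L^\infty}\le \|h\|_{L^\infty([-cs,cs])}$, using $|\Om|=1$.

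Next I would prove the chain-rule identities
$$
\nabla h(u)=h'(u)\nabla u,\qquad \partial_i\partial_j h(u)=h''(u)\,\partial_i u\,\partial_j u+h'(u)\,\partial_i\partial_j u .
$$
I would do this by approximation. Take smooth (e.g. trigonometric-polynomial) $u_k\to u$ in $H^2$; by the embedding, $u_k\to u$ uniformly as well, with $\|u_k\|_{L^\infty}\le c\|u_k\|_{H^2}$. For smooth $u_k$ the formulas are classical. Since $h',h''$ are uniformly continuous on a compact interval containing all the ranges, $h'(u_k)\to h'(u)$ and $h''(u_k)\to h''(u)$ uniformly, while $\partial_i\partial_j u_k\to \partial_i\partial_j u$ in $L^2$. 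For the product term I also need $\partial_i u_k\,\partial_j u_k\to \partial_i u\,\partial_j u$ in $L^2$, which follows from $\nabla u_k\to\nabla u$ in $L^4$ by the embedding $H^1\embed L^4$ in two dimensions. Passing to the limit in the distributional definition of the weak derivative then gives both identities and $h(u)\in H^2$. This also justifies the identity $\nabla(f(u))=f'(u)\nabla u$ used in the proof of Theorem \ref{TheorAbsorbH2}.

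The bounds then follow term by term. For the first derivatives, $\|h'(u)\nabla u\|_{L^2}\le \|h'\|_{L^\infty([-cs,cs])}\,s$. For the second derivatives, $\|h'(u)\partial_i\partial_j u\|_{L^2}\le \|h'\|_{L^\infty([-cs,cs])}\,s$. The product term is bounded by $\|h''\|_{L^\infty([-cs,cs])}\|\nabla u\|_{L^4}^2$, and Ladyzhenskaya or Sobolev in two dimensions gives $\|\nabla u\|_{L^4}\lesssim \|\nabla u\|_{H^1}\lesssim s$, so this term is at most a constant times $\|h''\|_{L^\infty([-cs,cs])}\,s^2$. Summing the three contributions gives $\|h(u)\|_{H^2}\lesssim M_h(\|u\|_{H^2})$, after enlarging $c$ if needed so that it serves both as the embedding constant and inside $M_h$. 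The only delicate step is the justification of the second-order chain rule, specifically the $L^4$ control of $\nabla u$; the periodic setting makes the approximation by smooth functions routine.
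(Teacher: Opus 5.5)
Your proposal is correct and follows essentially the paper's argument. The embedding $H^2\hookrightarrow L^\infty$ confines $h,h',h''$ to $[-cs,cs]$, the chain rule gives the derivatives of $h(u)$, and $H^1\hookrightarrow L^4$ controls the quadratic gradient term. The only differences are technical: you prove the second-order chain rule by density of trigonometric polynomials and bound the full Hessian, whereas the paper cites Brezis' chain and product rules (Props.~9.4--9.5), computes only $\Delta(h(u))$, and uses $\|v\|_{H^2}\lesssim\|v\|_{L^2}+\|\Delta v\|_{L^2}$ on the torus.
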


\begin{Proof}{Lemma \ref{LemgH2}}
First note that $u\in H^2(\Om)\embed L^\infty(\Om)$ by the Sobolev embedding, so that we can write
\begin{equation}\label{Richardwantsnumbers}
|h(u)| 
\le 
|h(u)| 1_{\{|u|\le c \|u\|_{H^2}\}}
\le 
\|h\|_{L^\infty([-c\|u\|_{H^2}, c\|u\|_{H^2}])}
<
\infty
,
\end{equation}
where $c>0$ is any number such that $\|u\|_{L^\infty(\Om)}\le c \|u\|_{H^2}$, and noting that $h$ is continuous over~$\R$. It follows that $h(u)\in L^\infty(\Om)$ hence also $h(u)\in L^2(\Om)$ since $\Om$ is bounded. Thus, it remains to establish that $\Delta(h(u))\in L^2(\Om)$. We have $\nabla (h(u))=h'(u) (\nabla u)$ by virtue of the chain rule for weak derivatives (see, e.g., Proposition~9.5 in \cite{Brezis2011}). Note that for all $v,w\in H^1(\Om)$, we have $v \nabla w, w \nabla v \in L^1(\Omega)$ so that Propostion~9.4 in \cite{Brezis2011} provides $\nabla(vw)=v \nabla w + w\nabla v$ in the weak sense. Consequently, if we show that $h''(u)\nabla u\in L^1(\Om)$ and $h'(u)\Delta u\in L^1(\Om)$, this will yield $\Delta (h(u))= h''(u)|\nabla u|^2+h'(u)\Delta u$ in the weak sense. Since we also need to show that the r.h.s. of the last equality belongs to $L^2(\Om)$, it suffices to argue that each term in fact belongs to $L^2(\Om)\embed L^1(\Om)$.
For the first term, arguing as above implies that $h''(u)\in L^\infty(\Om)$, and the Sobolev embedding $H^1(\Om)\embed L^4(\Om)$ implies that $|\nabla u|^2\in L^2(\Om)$ since $\nabla u\in H^1(\Om)$. For the second term, we have $h'(u)\in L^\infty(\Om)$ and $\Delta u\in L^2(\Om)$ since $u\in H^2(\Om)$. We deduce that $h(u)\in H^2(\Om)$, and the expression obtained for $\Delta (h(u))$ implies that
\begin{align*}
\|h(u)\|_{H^2}
&\lesssim
\|h(u)\|_{L^2} + \|\Delta (h(u))\|_{L^2}
\\[2mm]
&\lesssim 
\|h\|_{L^\infty([-c\|u\|_{H^2}, c\|u\|_{H^2}])}
+
\|h''(u)\|_{L^\infty} \|\nabla u\|^2_{L^4} + \|h'(u)\|_{L^\infty} \|\Delta u\|_{L^2}
\\[2mm]
&\lesssim 
\|h\|_{L^\infty([-c\|u\|_{H^2}, c\|u\|_{H^2}])}
+
\|h'(u)\|_{L^\infty} \|u\|_{H^2} 
+
\|h''(u)\|_{L^\infty}  \|u\|^2_{H^2} 
,
\end{align*}
by virtue of the Sobolev embedding $L^4(\Om)\embed H^1(\Om)$ applied to $\nabla u$. Arguing as in (\ref{Richardwantsnumbers}) to bound $\|h''(u)\|_{L^\infty}$ and $\|h'(u)\|_{L^\infty}$ thus yields
$$
\|h(u)\|_{H^2}
\lesssim 
M_h(\|u\|_{H^2})
,
$$
with $M_h$ as in the statement.
\end{Proof}

\begin{Lem}\label{LemDiffR} 
Let $g\in C^3(\R)$ and  $u\in H^2(\Om)$. Then we have
$$
\|g'(u)v\|_{L^2}
\lesssim 
M_0(\|u\|_{H^2}) \|v\|_{L^2} 
,\spa 
v\in L^2(\Om)
,
$$
where $M_0(s) 
\equiv
\|g'\|_{L^\infty([-cs, cs])}$ for all $s\ge 0$ and some universal $c>0$, and
$$
\|g'(u)v\|_{H^2}
\lesssim 
M_1(\|u\|_{H^2}) \|v\|_{H^2}
,\spa 
v\in H^2(\Om)
,
$$
where $M_1(s)
\equiv 
\|g'\|_{L^\infty([-cs, cs])} 
+
\|g''\|_{L^\infty([-cs, cs])}s + \|g'''\|_{L^\infty([-cs,cs])}s^2$ for all $s\ge 0$.
\end{Lem}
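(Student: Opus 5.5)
The first estimate is a pointwise bound. Since $u\in H^2(\Om)\embed L^\infty(\Om)$, fix $c>0$ with $\|u\|_{L^\infty}\le c\|u\|_{H^2}$. Then, exactly as in (\ref{Richardwantsnumbers}), $|g'(u(x))|\le \|g'\|_{L^\infty([-c\|u\|_{H^2},c\|u\|_{H^2}])}=M_0(\|u\|_{H^2})$ for almost every $x$. Multiplying by $|v(x)|$ and integrating gives $\|g'(u)v\|_{L^2}\le M_0(\|u\|_{H^2})\|v\|_{L^2}$.

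For the $H^2$ estimate I will view $g'(u)v$ as a product $hv$ with $h\equiv g'(u)$. The plan has three steps.

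\textbf{Regularity of $h$.} Since $g\in C^3(\R)$, we have $g'\in C^2(\R)$. Applying Lemma \ref{LemgH2} to $g'$ gives $h\in H^2(\Om)$ with $\|h\|_{H^2}\lesssim M_{g'}(\|u\|_{H^2})=M_1(\|u\|_{H^2})$. The argument in that proof also gives $\|h\|_{L^\infty}\le\|g'\|_{L^\infty([-cs,cs])}$ and $\|g''(u)\|_{L^\infty}\le \|g''\|_{L^\infty([-cs,cs])}$, where $s=\|u\|_{H^2}$.

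\textbf{Product rule.} I will use the norm equivalence $\|w\|_{H^2}\asymp\|w\|_{L^2}+\|\Delta w\|_{L^2}$ on the torus. The weak product rule (Propositions 9.4--9.5 in \cite{Brezis2011}, as in Lemma \ref{LemgH2}) should give
$$\Delta(hv)=v\Delta h+2\nabla h\cdot\nabla v+h\Delta v,$$
provided each term is in $L^2$. I will check this and bound the terms one by one.

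\textbf{The three terms.}
\begin{itemize}
\item Since $H^2\embed L^\infty$, $\|v\Delta h\|_{L^2}\le\|v\|_{L^\infty}\|h\|_{H^2}\lesssim\|v\|_{H^2}\,M_1(s)$.
\item Since $H^1\embed L^4$, $\|\nabla h\cdot\nabla v\|_{L^2}\le\|\nabla h\|_{L^4}\|\nabla v\|_{L^4}\lesssim \|h\|_{H^2}\|v\|_{H^2}$.
\item By the first step, $\|h\Delta v\|_{L^2}\le\|g'\|_{L^\infty([-cs,cs])}\|v\|_{H^2}$.
\end{itemize}
The zeroth-order term satisfies $\|hv\|_{L^2}\le \|h\|_{L^\infty}\|v\|_{L^2}$. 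Summing these bounds gives $\|g'(u)v\|_{H^2}\lesssim M_1(\|u\|_{H^2})\|v\|_{H^2}$.

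The only delicate point is the justification of the weak Leibniz formula for second derivatives of a product of two $H^2$ functions. I will handle it by smooth approximation of $v$ in $H^2$ (trigonometric polynomials on the torus). For smooth $v$ the formula follows from the first-order product rule. One then passes to the limit using the continuity estimates above, which are bilinear and continuous in $v$ with respect to the $H^2$ norm.
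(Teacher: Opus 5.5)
Your proposal is correct and follows the paper's proof. The first bound is the same $L^\infty$ estimate on $g'(u)$ as in (\ref{Richardwantsnumbers}); the second combines Lemma \ref{LemgH2} applied to $g'$ (so that $M_{g'}=M_1$) with the $H^2$ multiplier inequality $\|g'(u)v\|_{H^2}\lesssim\|g'(u)\|_{H^2}\|v\|_{H^2}$. The only difference is that the paper cites this algebra property of $H^2$ in two dimensions, whereas you prove it by hand. Your verification (the Leibniz formula for $\Delta(hv)$, the embeddings $H^2\embed L^\infty$ and $H^1\embed L^4$, and approximation of $v$ by trigonometric polynomials) is a valid justification.
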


\begin{Proof}{Lemma \ref{LemDiffR}}
For the first inequality, we bound
$$
\|g'(u)v\|_{L^2}
\le 
\|g'(u)\|_{L^\infty} \|v\|_{L^2}
,
$$
and the first inequality in the proof of Lemma \ref{LemgH2} applied to $h\equiv g'$ yields the desired bound. For the second inequality, we use the multiplier inequality for Sobolev norms to the effect that 
$$
\|g'(u)v\|_{H^2}
\lesssim 
\|g'(u)\|_{H^2} \|v\|_{H^2}
,
$$
and Lemma \ref{LemgH2} applied to $h\equiv g'$ concludes the proof.
\end{Proof}






We can now state and prove the main theorem of this section. Recall that $P_n$ is the $L^2$-projector from (\ref{projn}) and $Q_n= {\rm Id} - P_n$.

\begin{Theor} \label{TheorManifold}
Let $\mathcal A$ be the global attractor from Proposition \ref{PropAttractor}. There exists $n\in\N$ and a map $\Phi : P_n[L^2(\Om)]\to Q_n[L^2(\Om)]\cap H^2(\Om)$ such that (i) any $u\in\AA$ writes as $u=u_n + \Phi(u_n)$ with $u_n=P_n[u]$, and (ii) there exists a constant $C>0$ such that
\begin{equation}\label{AContH2State}
\|\Phi(v)-\Phi(w)\|_{H^2}
\le 
C\|v-w\|_{L^2(\Om)}
,\spa 
\forall\ v,w\in P_n[L^2(\Om)]
.
\end{equation}
\end{Theor}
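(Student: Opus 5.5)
We construct $\Phi$ as an inertial manifold of a modified equation, following \cite{Foias1988}. The key fact we exploit is that on the two-dimensional torus the eigenvalues of $-\Delta$ are the values $4\pi^2(k_1^2+k_2^2)$, $k\in\mathbb{Z}^2$. By Richards' theorem on gaps in sums of two squares, there are arbitrarily large $n$ for which the spectral gap satisfies $\lambda_n-\lambda_{n-1}\gtrsim \log \lambda_n$. Such a sequence of gaps is enough for the spectral gap condition in the Mallet-Paret--Sell / Foias--Sell--Temam framework, once the nonlinearity has been made globally Lipschitz with a constant independent of $n$.

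\textbf{Step 1: truncate the nonlinearity.} Let $\rho$ be the radius of the $H^2$-absorbing ball from Theorem \ref{TheorAbsorbH2}; it contains $\AA$ by Proposition \ref{PropAttractor}. Fix a smooth cutoff $\chi:[0,\infty)\to[0,1]$ with $\chi=1$ on $[0,2\rho]$ and $\chi=0$ on $[4\rho,\infty)$. Define the prepared nonlinearity
$$
R(u)\equiv \chi(\|u\|_{H^2})\, f(u),
$$
or, more conveniently for the gap argument, with $\|u\|_{H^2}$ replaced by $\|\Delta u\|_{L^2}+\|u\|_{L^2}$. By Lemma \ref{LemgH2} and Lemma \ref{LemDiffR} (applied with $g=f$), $R$ maps $H^2(\Om)$ to $H^2(\Om)$, is bounded, and is globally Lipschitz from $H^2$ to $H^2$ (also from $H^2$ to $L^2$) with a constant $L_R$ depending only on $f$ and $\rho$; this uses $f\in C^3$. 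The prepared equation $\partial_t u-\Delta u=R(u)$ has the same trajectories as \eqref{eq:ReacDiff} inside the absorbing ball. Hence $\AA$ is invariant for it and is contained in its global attractor.

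\textbf{Step 2: Lyapunov--Perron fixed point.} Fix an $n$ with a large gap. For each $v\in P_n[L^2(\Om)]$, look for the unique solution $u(s)$, $s\le 0$, of the prepared equation that satisfies $P_nu(0)=v$ and has at most exponential growth $e^{-\gamma s}$ as $s\to-\infty$, where $\gamma$ is chosen in the gap $(\lambda_{n-1},\lambda_n)$. Write this solution as a fixed point of the Lyapunov--Perron operator
$$
\mathcal T u(s)=e^{-\Delta_n s}v-\int_s^0 e^{-\Delta_n(s-\sigma)}P_nR(u(\sigma))\,d\sigma+\int_{-\infty}^s e^{\Delta(s-\sigma)}Q_nR(u(\sigma))\,d\sigma,
$$
posed on the weighted space $\sup_{s\le0}e^{\gamma s}\|u(s)\|_{H^2}<\infty$. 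Using $H^2\to H^2$ Lipschitz bounds for $R$ and the spectral estimates on $P_n$ and $Q_n$, the contraction constant is of order $L_R(\lambda_n-\lambda_{n-1})^{-1}$, up to lower-order terms involving $\sqrt{\lambda_n}$ if one works with mixed norms. The gap condition then makes $\mathcal T$ a contraction for $n$ large. Set $\Phi(v)\equiv Q_nu(0)$.

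Contraction also gives a bound on differences in the weighted $H^2$ norm by $\|v-w\|_{H^2}$. Since $P_n[L^2]$ is finite-dimensional, $\|v-w\|_{H^2}\le(1+\lambda_n)\|v-w\|_{L^2}$, which yields \eqref{AContH2State}. Because $R$ is supported in a ball, $\Phi$ is bounded and can be taken supported as stated after the theorem.

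\textbf{Step 3: the attractor lies in the graph.} Every $u_0\in\AA$ lies on a complete bounded trajectory in $\AA$, by invariance \eqref{invA}. This trajectory solves the prepared equation and is bounded in $H^2$, so it belongs to the weighted space. By uniqueness of the fixed point it equals the Lyapunov--Perron solution with $v=P_nu_0$. Therefore $u_0=P_nu_0+\Phi(P_nu_0)$, which proves (i).

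\textbf{Main obstacle.} The hardest step is the contraction estimate in Step 2. We need a Lipschitz constant for $R$ that does not grow with $n$, together with gap growth beating it. In 2D we only get logarithmic gaps, which is exactly the borderline case treated in \cite{Foias1988} and Mallet-Paret--Sell, so we will adapt their argument for $\Omega=(0,1]^2$ directly rather than use the crude gap condition. If that turns out to be delicate, the fallback is the cone-invariance / squeezing approach of \cite{Foias1988}. There, the needed inequality is $\lambda_n-\lambda_{n-1}> 2L_R$ with $L_R$ the $H^2\to H^2$ Lipschitz constant, and this again holds along Richards' subsequence of $n$.
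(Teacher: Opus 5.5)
Your proposal is correct and is essentially the paper's argument. The paper verifies the hypotheses of Theorem~3.1 in \cite{Foias1988} with $\beta=0$: an $H^2\to H^2$ Lipschitz nonlinearity via Lemmas~\ref{LemgH2} and~\ref{LemDiffR}, plus the $H^2$-absorbing ball of Theorem~\ref{TheorAbsorbH2}. It then uses only that the gaps $\lambda_{n+1}-\lambda_n$ on the two-dimensional torus are unbounded, and your $H^2$-truncation and Lyapunov--Perron fixed point spell out what that theorem does internally. Two remarks:
\begin{itemize}
\item Your $H^2\to H^2$ Lipschitz constant $L_R$ does not depend on $n$, so the contraction only needs $\lambda_n-\lambda_{n-1}>CL_R$ for a fixed numerical $C$. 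Mere unboundedness of the gaps provides this, so the ``logarithmic/borderline'' worry in your last paragraph is unfounded.
\item Your aside that the prepared equation has a global attractor containing $\AA$ is unnecessary and doubtful. Outside the cutoff ball the mean mode is undamped, so large constants are equilibria. Step~3 never uses this claim.
\end{itemize}
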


Observe that Theorem \ref{TheorManifold} does not imply that any $u\in L^2(\Om)$ can be decomposed as $u = P_n[u] + \Phi(P_n[u])$, otherwise the $H^2$-Lipschitz continuity in item (ii) could not hold. In fact, $\Phi$ need not even be linear, and one can further show that $\Phi(v)=0$ when $\|v\|_{H^2}>8\rho$, where $\rho$ is an $H^2$-bound on $\AA$ (see Proposition \ref{PropAttractor}).

One can now define the \textit{inertial manifold} as the `graph' of $\Phi$
\begin{equation}\label{defInerMan}
    \MM 
    \equiv 
    \Big\{ v + \Phi(v) : v\in P_n[L^2(\Om)],~ \|v\|_{H^2}\le 8\rho
    \Big\} 
    .
\end{equation}
The properties of $\Phi$ imply that $\MM$ contains $\AA$ and is an $L^2(\Om)$-Lipschitz manifold bounded in $H^2(\Om)$. The proof of Theorem~\ref{TheorManifold} further entails that $\MM$ is invariant under the solution operators $S(t)$ of the reaction-diffusion system (\ref{eq:ReacDiff}), \ie 
\begin{equation}\label{invM}
    u_\theta(t) \in \MM 
    ,\spa 
    \forall\ t\ge 0,\ \forall\ \theta\in\MM 
    ;
\end{equation}
this follows from the construction of $\MM$ (see (iii) in Section~3 of \cite{Foias1988}).

\begin{Proof}{Theorem \ref{TheorManifold}}
The existence of a map $\Phi$ with the required properties follows from Theorem 3.1 in \cite{Foias1988}. First, we rewrite the reaction-diffusion system (\ref{eq:ReacDiff}) as 
\begin{equation}\label{PDEA}
\frac{du}{dt} + A[u] = g(u)
,
\end{equation}
where $A[u] \equiv (\delta {\rm Id} - \Delta)u$ with $\delta>0$, and $g(s) \equiv f(s) + \delta s$. Then, $A$ is linear, self-adjoint on $L^2(\Om)$ and  bijective from $H^2(\Om)$ to $L^2(\Om)$, and $g$ satisfies the same assumptions as $f$ in (\ref{Condf}) for different values for the parameters $k, \alpha_1, \alpha_2$. We will now check the conditions of Section~2 in \cite{Foias1988}, with operator $A$ as above, Hilbert space $H\equiv L^2(\Om)$, dense domain $D(A)\equiv H^2(\Om)$, eigenvalues $\sigma_j \equiv \delta + \lambda_j$ such that $0<\sigma_0\le \sigma_1 \le \ldots \sigma_j \asymp j$ as $j\to\infty$ (provided $\delta > 0(=\lambda_0)$), and functional $R[u]\equiv -g(u)$. Since $g\in C^2(\R)$, Lemma~\ref{LemgH2} provides $g(u)\in H^2(\Om)$ for any $u\in H^2(\Om)$, to the effect that $R$ maps $H^2(\Om)$ into $H^2(\Om)$. In addition, $R$ is Fréchet differentiable from $D(A)\equiv H^2(\Om)$ into $D(A^{1-\beta})\equiv H^2(\Om)$ with $\beta=0$; see Proposition 6.4 in \cite{Kon25Minimax}. Since $g\in C^3(\R)$, Lemma \ref{LemDiffR} implies that (2.1a) and (2.1b) in \cite{Foias1988} hold with $\beta=0$, and non-negative and monotone non-decreasing functions $M_0$ and $M_1$ featuring in Lemma \ref{LemDiffR}. Theorem~\ref{TheorAbsorbH2} entails that $(H^2(\Om), \{S(t) : t\ge 0\})$ admits an absorbing ball in $H^2(\Om)$, \ie there exists an $H^2$-ball $B$ such that the image of any $H^2$-ball through $S(t)$ is included in $B$ for all $t$ large enough. The sequence $\{\lambda_{k+1}-\lambda_k : k\ge 0\}$ is unbounded (see Section 15.4.2 in \cite{Robinson2001}) so that, for any $C>0$, there exists $n\in\N$ such that 
$$
\sigma_{n+1}-\sigma_n 
=
\lambda_{n+1}-\lambda_n 
>
C
,
$$
which yields (3.28) in \cite{Foias1988a}. Consequently, Theorem 3.1 there yields the existence of a map $\Phi$ as in the statement with $n$ as above.
\end{Proof}

\textbf{Acknowledgements.} The authors would like to thanks Soufiane Noubir for various interesting discussions about the content of this article, and further gratefully acknowledge funding from an ERC Advanced Grant (UKRI G116786) and EPSRC programme grant EP/V026259.

\bibliographystyle{agsm}
\bibliography{mybib}

@article {ABJN25,
    AUTHOR = {Alberti, Giovanni S. and Barnes, Douglas and Jambhale, Aditya
              and Nickl, Richard},
     TITLE = {On low frequency inference for diffusions without the hot
              spots conjecture},
   JOURNAL = {Math. Stat. Learn.},
  FJOURNAL = {Mathematical Statistics and Learning},
    VOLUME = {8},
      YEAR = {2025},
    NUMBER = {3-4},
     PAGES = {305--322},
      ISSN = {2520-2316,2520-2324},
   MRCLASS = {62G05 (35P10 35R30 60H10)},
  MRNUMBER = {4963422},
       DOI = {10.4171/msl/53},
}

@article {N24,
    AUTHOR = {Nickl, Richard},
     TITLE = {Consistent inference for diffusions from low frequency
              measurements},
   JOURNAL = {Ann. Statist.},
  FJOURNAL = {The Annals of Statistics},
    VOLUME = {52},
      YEAR = {2024},
    NUMBER = {2},
     PAGES = {519--549},
      ISSN = {0090-5364,2168-8966},
   MRCLASS = {60H10 (62F15 62G20 62H25 62M15)},
  MRNUMBER = {4744186},
MRREVIEWER = {Robert\ Stelzer},
       DOI = {10.1214/24-aos2357},
}

@book {T97,
    AUTHOR = {Temam, Roger},
     TITLE = {Infinite-dimensional dynamical systems in mechanics and
              physics},
    SERIES = {Applied Mathematical Sciences},
    VOLUME = {68},
   EDITION = {Second},
 PUBLISHER = {Springer-Verlag, New York},
      YEAR = {1997},
     PAGES = {xxii+648},
      ISBN = {0-387-94866-X},
   MRCLASS = {58Fxx (34G20 35K55 35Q30 47H20 58D25 76D05)},
  MRNUMBER = {1441312},
MRREVIEWER = {Bruno\ Scheurer},
       DOI = {10.1007/978-1-4612-0645-3},
}

@book {BV92,
    AUTHOR = {Babin, A. V. and Vishik, M. I.},
     TITLE = {Attractors of evolution equations},
    SERIES = {Studies in Mathematics and its Applications},
    VOLUME = {25},
      NOTE = {Translated and revised from the 1989 Russian original by
              Babin},
 PUBLISHER = {North-Holland Publishing Co., Amsterdam},
      YEAR = {1992},
     PAGES = {x+532},
      ISBN = {0-444-89004-1},
   MRCLASS = {58F12 (35B40 35K22 47H20 47N20 58F39 76D05)},
  MRNUMBER = {1156492},
MRREVIEWER = {Woodford\ W.\ Zachary},
}

@article {BV83,
    AUTHOR = {Babin, A. V. and Vishik, M. I.},
     TITLE = {Attractors of evolution partial differential equations and
              estimates of their dimension},
   JOURNAL = {Uspekhi Mat. Nauk},
  FJOURNAL = {Akademiya Nauk SSSR i Moskovskoe Matematicheskoe Obshchestvo.
              Uspekhi Matematicheskikh Nauk},
    VOLUME = {38},
      YEAR = {1983},
    NUMBER = {4(232)},
     PAGES = {133--187},
      ISSN = {0042-1316},
   MRCLASS = {58F12 (35B40)},
  MRNUMBER = {710119},
MRREVIEWER = {Yu.\ V.\ Kostarchuk},
}

@article{Foias1988a,
author = {Foias, Ciprian and Sell, George R. and Temam, Roger},
title = {Inertial manifolds for nonlinear evolutionary equations},
journal = {Journal of Differential Equations},
volume = {73},
number = {2},
pages = {309--353},
year = {1988},
publisher = {Elsevier},
doi = {10.1016/0022-0396(88)90110-6}
}

@article{Foias1988,
author = {Foias, Ciprian and Sell, George R. and Titi, Edriss S.},
title = {Exponential Tracking and Approximation of Inertial Manifolds for Dissipative Nonlinear Equations},
journal = {Journal of Dynamics and Differential Equations},
volume = {1},
number = {2},
pages = {199--241},
year = {1989},
publisher = {Springer},
doi = {10.1007/BF01046904}
}

@article {NPR25,
    AUTHOR = {Nickl, Richard and Pavliotis, Grigorios A. and Ray, Kolyan},
     TITLE = {Bayesian nonparametric inference in {M}c{K}ean-{V}lasov
              models},
   JOURNAL = {Ann. Statist.},
  FJOURNAL = {The Annals of Statistics},
    VOLUME = {53},
      YEAR = {2025},
    NUMBER = {1},
     PAGES = {170--193},
      ISSN = {0090-5364,2168-8966},
   MRCLASS = {62G08 (35Q70 35Q84 62C10 62F15 62G20)},
  MRNUMBER = {4865012},
MRREVIEWER = {Sreenivasan\ Ravi},
       DOI = {10.1214/24-aos2459},
}

@article{CN26,
  author  = {Castre, Aur\'elien and Nickl, Richard},
  title   = {On gradient stability in nonlinear PDE models and inference in interacting particle systems},
  journal = {arXiv preprint},
  year    = {2026},
}

@article{NS26,
  author  = {Nickl, Richard and Seizilles, Fanny},
  title   = {Inferring diffusivity from killed diffusion},
  journal = {Annals of Statistics},
  pages = {to appear},
  year    = {2026},
}

@article{Marion1987,
  author    = {Marion, Martine},
  title     = {Attractors for reaction--diffusion equations: existence and estimate of their dimension},
  journal   = {Applicable Analysis},
  year      = {1987},
  volume    = {25},
  number    = {1-2},
  pages     = {101--147},
  doi       = {10.1080/00036818708839678},
  publisher = {Taylor \& Francis}
}

@book{Brezis2011,
  author    = {Br{\'e}zis, Haim},
  title     = {Functional Analysis, Sobolev Spaces and Partial Differential Equations},
  series    = {Universitext},
  publisher = {Springer},
  address   = {New York},
  year      = {2011},
  isbn      = {978-0-387-70914-7},
  doi       = {10.1007/978-0-387-70914-7},
}

@article {KonNic26,
    AUTHOR = {Konen, D. and Nickl, R.},
     TITLE = {Data assimilation with the $2D$ Navier-Stokes equations: Optimal Gaussian asymptotics for the posterior measure},
   JOURNAL = {Annals of Statistics},
  FJOURNAL = {},
    VOLUME = {},
      YEAR = {2026},
    NUMBER = {},
     PAGES = {to appear}
}

@article {GW25,
    AUTHOR = {Giordano, Matteo and Wang, Sven},
     TITLE = {Statistical algorithms for low-frequency diffusion data: a
              {PDE} approach},
   JOURNAL = {Ann. Statist.},
  FJOURNAL = {The Annals of Statistics},
    VOLUME = {53},
      YEAR = {2025},
    NUMBER = {3},
     PAGES = {1150--1175},
      ISSN = {0090-5364,2168-8966},
   MRCLASS = {62G05 (35R60 62F15 62M15)},
  MRNUMBER = {4925119},
MRREVIEWER = {Sreenivasan\ Ravi},
       DOI = {10.1214/25}
}

@article {MvdMvdV26,
    AUTHOR = {Magra, Adel and van der Meulen, Frank and van der Vaart, Aad},
     TITLE = {Semi-parametric Bernstein-von Mises theorem in a a parabolic {PDE} problem},
   JOURNAL = {Arxiv preprint},
  FJOURNAL = {},
    VOLUME = {},
      YEAR = {2026},
    NUMBER = {},
     PAGES = {},
}

@article {Kon25Minimax,
    AUTHOR = {Konen, D.},
     TITLE = {Inverting the {F}isher information operator in non-linear models},
   JOURNAL = {Arxiv preprint arXiv:2601.13254},
  FJOURNAL = {},
    VOLUME = {},
      YEAR = {2026},
    NUMBER = {},
     PAGES = {},
}

@article {CDRS09,
    AUTHOR = {Cotter, S. L. and Dashti, M. and Robinson, J. C. and Stuart,
              A. M.},
     TITLE = {Bayesian inverse problems for functions and applications to
              fluid mechanics},
   JOURNAL = {Inverse Problems},
  FJOURNAL = {Inverse Problems. An International Journal on the Theory and
              Practice of Inverse Problems, Inverse Methods and Computerized
              Inversion of Data},
    VOLUME = {25},
      YEAR = {2009},
    NUMBER = {11},
     PAGES = {115008, 43},
}

@article {S10,
    AUTHOR = {Stuart, A. M.},
     TITLE = {Inverse problems: a {B}ayesian perspective},
   JOURNAL = {Acta Numer.},
  FJOURNAL = {Acta Numerica},
    VOLUME = {19},
      YEAR = {2010},
     PAGES = {451--559},
}

@book{Robinson2001,
	author = {Robinson, J.},
	publisher = {Cambridge Univ. Press},
	title = {Infinite-Dimensional Dynamical Systems: An introduction to dissipative parabolic {PDE}s and the theory of global attractors},
	year = {2001}}

@book {LSZ15,
    AUTHOR = {Law, Kody and Stuart, Andrew and Zygalakis, Konstantinos},
     TITLE = {Data assimilation},
 PUBLISHER = {Springer, Cham},
      YEAR = {2015},
     PAGES = {xviii+242},
}

@book {RC15,
    AUTHOR = {Reich, Sebastian and Cotter, Colin},
     TITLE = {Probabilistic forecasting and {B}ayesian data assimilation},
 PUBLISHER = {Cambridge University Press, New York},
      YEAR = {2015},
     PAGES = {x+297},
}

@book{NicklEMS,
	author = {Nickl, R.},
	publisher = {EMS Press, Berlin},
	series = {EMS Zurich Lectures in Advanced Mathematics},
	title = {Bayesian non-linear statistical inverse problems},
	year = {2023}}

@article{NicTit2024,
	author = {Nickl, R. and Titi, E.},
	journal = {Ann. Statist.},
	number = {4},
	pages = {1825--1844},
	title = {On posterior consistency of data assimilation with {G}aussian process priors: the 2{D} {N}avier-{S}tokes equations},
	volume = {52},
	year = {2024}}

@article{KSV24,
	author = {Koers, G and Szabo, B. and van der Vaart, A.W.},
	journal = {Annals of Statistics},
	title = {Linear methods for non-linear inverse problems},
    pages={to appear},
	year = {2025}
}

@article{Nickl2024,
	author = {Nickl, R.},
	journal = {arXiv preprint arXiv:2407.14781},
	title = {Bernstein-von {M}ises theorems for time evolution equations},
	year = {2024}}

@book {GV17,
    AUTHOR = {Ghosal, Subhashis and van der Vaart, Aad},
     TITLE = {Fundamentals of nonparametric {B}ayesian inference},
    VOLUME = {44},
 PUBLISHER = {Cambridge University Press, Cambridge},
      YEAR = {2017},
     PAGES = {xxiv+646},
      ISBN = {978-0-521-87826-5},
}

@article {B23,
    AUTHOR = {Bohr, Jan},
     TITLE = {A {B}ernstein--von-{M}ises theorem for the {C}alder\'on
              problem with piecewise constant conductivities},
   JOURNAL = {Inverse Problems},
  FJOURNAL = {Inverse Problems. An International Journal on the Theory and
              Practice of Inverse Problems, Inverse Methods and Computerized
              Inversion of Data},
    VOLUME = {39},
      YEAR = {2023},
    NUMBER = {1},
     PAGES = {Paper No. 015002, 18},
      ISSN = {0266-5611,1361-6420},
   MRCLASS = {35R60 (62F15)},
  MRNUMBER = {4527131},
}

@article {AN19,
    AUTHOR = {Abraham, Kweku and Nickl, Richard},
     TITLE = {On statistical {C}alder\'on problems},
   JOURNAL = {Math. Stat. Learn.},
  FJOURNAL = {Mathematical Statistics and Learning},
    VOLUME = {2},
      YEAR = {2019},
    NUMBER = {2},
     PAGES = {165--216},
      ISSN = {2520-2316,2520-2324},
   MRCLASS = {35R30 (35J25 62F15 62G05)},
  MRNUMBER = {4130599},
}

@book{van1998,
	address = {Cambridge},
	author = {van der Vaart, A. W.},
	publisher = {Cambridge Univ. Press},
	title = {Asymptotic Statistics},
	year = {1998}}

@article {MNP21a,
    AUTHOR = {Monard, Francois and Nickl, Richard and Paternain, Gabriel
              P.},
     TITLE = {Consistent inversion of noisy non-{A}belian {X}-ray
              transforms},
   JOURNAL = {Comm. Pure Appl. Math.},
  FJOURNAL = {Communications on Pure and Applied Mathematics},
    VOLUME = {74},
      YEAR = {2021},
    NUMBER = {5},
     PAGES = {1045--1099},
      ISSN = {0010-3640,1097-0312},
}

@book{GinNic2016,
	address = {New York},
	author = {Gin{{\'e}}, Evarist and Nickl, Richard},
	publisher = {Cambridge University Press},
	title = {Mathematical Foundations of Infinite-Dimensional Statistical Models},
	year = {2016}}


\bigskip

\textsc{Department of Pure Mathematics \& Mathematical Statistics}

\textsc{University of Cambridge}, Cambridge, UK

Email: dk738@cam.ac.uk, nickl@maths.cam.ac.uk







\end{document}